\newcommand\shorttitle{Ricci-curvature of homogeneous Finsler spaces}
\newcommand\authors{s. rani, g. shanker}
	\ifodd\value{page}
\authors
\shorttitle
\newtheorem{theorem}{Theorem}[section]
\newtheorem{example}[theorem]{Example}
\newtheorem{cor}[theorem]{Corollary}
\newtheorem{remark}{\sc Remark}
\newtheorem{lemma}{\sc Lemma}[section]
\newtheorem{corollary}{\sc Corollary}[section]
\newtheorem{definition}{\sc Definition}[section]
\newcommand{\be}{\begin{eqnarray}}
\newcommand{\ee}{\end{eqnarray}}
\newcommand{\Be}{\begin{eqnarray*}}
	\newcommand{\Ee}{\end{eqnarray*}}
\newcommand{\bee}{\begin{equation}}
\newcommand{\eee}{\end{equation}}
\newcommand{\ba}{\begin{array}}
	\newcommand{\ea}{\end{array}}
\newcommand{\bl}{\begin{lemma}}
	\newcommand{\el}{\end{lemma}}
\newcommand{\bd}{\begin{definition}}
	\newcommand{\ed}{\end{definition}}
\newcommand{\bt}{\begin{theorem}}
	\newcommand{\et}{\end{theorem}}
\newcommand{\bp}{\begin{proof}}
	\newcommand{\ep}{\end{proof}}
\newcommand{\bi}{\begin{itemize}}
	\newcommand{\ei}{\end{itemize}}
\newcommand{\br}{\begin{remark}}
	\newcommand{\er}{\end{remark}}
\newcommand{\bc}{\begin{corollary}}
	\newcommand{\ec}{\end{corollary}}
\newcommand{\bex}{\begin{example}}
	\newcommand{\eex}{\end{example}}
\begin{document}
	\afterpage{\cfoot{\thepage}}
	\clearpage
	\date{}

	\title{\textbf{On the  Ricci curvature of   homogeneous Finsler spaces with  $(\alpha,\beta)$-metrics}}
	
	\author[1]{Sarita Rani }
	\author[1]{Gauree Shanker\thanks{corresponding author}}
	\affil[1]{\footnotesize Department of Mathematics and Statistics,
		Central University of Punjab, Bathinda, Punjab-151 401, India. \newline 
Email: saritas.ss92@gmail.com,  grshnkr2007@gmail.com	}
	\maketitle
	\begin{center}
		\textbf{Abstract}
	\end{center}
	\begin{small}
		The study of curvature properties of homogeneous Finsler spaces with $(\alpha, \beta)$-metrics is one of the central problems in Riemann-Finsler geometry.  In this paper, we consider  homogeneous Finsler spaces with  square metric and Randers change of square metric.  First, we derive the  explicit formulae  for Ricci curvature of  homogeneous Finsler spaces with these metrics. Next, we find a necessary and sufficient condition under which a homogeneous Finsler space with either of these  metrics is of vanishing $S$-curvature. The  formulae for Ricci curvature of  homogeneous Finsler spaces with square  metric and Randers change of square metric having vanishing $S$-curvature are  established. Finally, we prove that the aforesaid spaces having vanishing $S$-curvature and negative Ricci curvature must be Riemannian. 
	\end{small}\\
	\textbf{Mathematics Subject Classification:} 22E60, 53C30, 53C60.\\
	\textbf{Keywords and Phrases:} Homogeneous Finsler space, square metric,  Randers change, Ricci curvature,  $S$-curvature.

	\section{Introduction}
	Finsler geometry is an interesting and active area of research for both pure and applied reasons (for detail, see   \cite{Antonelli,S.I.AmaH.Nag,RGarGWil1996,GKro1934}). 
	One special class of Finsler spaces is homogeneous and symmetric Finsler spaces, which is  an active area of research nowadays. Many authors  (for detail, see   \cite{SDeng2009, SDenXWan2010, ZHuSDen2012, MXuSDen2015, SRGS2019, 	GSSR2019,LH2015}) have worked in this area. Ricci curvature, denoted by $ Ric(x,y)$  is an important entity in Riemann-Finsler geometry.
	It is the mathematical object that controls growth rate of volume of a metric ball in a manifold.
	The Ricci curvature of a Finsler space $(M,F)$ can always be expressed in the form
	$ Ric(x,y)=(n-1) \lambda(x,y) F^2, $ where $ \lambda(x,y) $ is a scalar function on $TM,$ called {Einstein scalar.}
	If the Einstein scalar $\lambda(x,y)$ depends only on $x,$ then the Finsler metric $F$ is called an {Einstein metric}. Einstein manifolds play an important role in Riemann-Finsler geometry. Motivated by the open problem 
	\begin{center}
		 ``Does every smooth manifold admits an Einstein Finsler metric?"
	\end{center}
	posed  by Chern, so many  geometers  (for example, see    \cite{BRS2004, AB1987, CST2012, SD2015, RRR2012, ZS2001, YY2010}) have established many results on Einstein Finsler metrics. \\
	 In  \cite{BRARNS2008},  Rezaei et al.  studied some Einstein $ (\alpha, \beta) $-metrics. Further, in \cite{HWanSDen2010}, Wang and Shen  have  studied Einstein-Randers metrics on  homogeneous Riemannian spaces. After that,  Deng and  Hu \cite{SDengZHu2013} extended the work done in \cite{HWanSDen2010} to homogeneous Finsler spaces.\\
	In \cite{DBaoCRob2004},  Bao and  Robles  proved that a compact Einstein-Randers space  without boundary and with negative Ricci curvature must be Riemannian. The problem of negative Ricci curvature was further studied by Mo and Yu in  \cite{XCT2008}.\\
	In \cite{LZho2010}, Zhou  studied Finsler spaces with  $ (\alpha, \beta) $-metrics and introduced  formulae for Riemann curvature and Ricci curvature for these metrics. Further,  Cheng et al. \cite{CST2012} studied  Einstein  $ (\alpha, \beta) $-metrics. During their study, they  found that the formulae given in \cite{LZho2010} are incorrect and they provided correct version of these formulae. Based on these formulae, a formula for Ricci curvature of homogeneous  $ (\alpha, \beta) $-metrics was constructed  by  Yan and Deng \cite{ZYSD2016}.\\
	 In 1972,  Matsumoto \cite{M.Mat1972}  introduced the concept of $(\alpha, \beta)-$metrics which are the generalizations of Randers metric introduced by  Randers \cite{Randers}. The main aim of this paper is to establish an explicit formula for Ricci-curvature  of a  homogeneous Finsler space with  Shen's square metric, and the same for that metric having vanishing $S$-curvature. \\	
	The simplest non-Riemannian metrics are the Randers metrics given by $F=\alpha+ \beta$ with $\lVert \beta\rVert_\alpha <1, $ where $\alpha$ is a Riemannian metric and $\beta$ is a 1-form. Besides Randers metrics, other interesting kind of non-Riemannian metrics are square metrics. Berwald's metric, constructed by Berwald \cite{LBer1929} in 1929 as 
	$$ F=\dfrac{\left( \sqrt{\left( 1-\lvert x\rvert^2\right)\lvert y\rvert^2+\langle x,y \rangle^2 }+\langle x,y \rangle \right)^2 }{\left( 1-\lvert x\rvert^2\right)^2 \sqrt{\left( 1-\lvert x\rvert^2\right)\lvert y\rvert^2+\langle x,y \rangle^2 }} $$ 
	is a classical example of square metric. Berwald's metric can be rewritten as follows:
	\begin{equation}{\label{s1}}
	F=\dfrac{\left( \alpha+ \beta\right) ^2}{\alpha},
	\end{equation}
	where $$ \alpha= \dfrac{\sqrt{\left( 1-\lvert x\rvert^2\right)\lvert y\rvert^2+\langle x,y \rangle^2 }}{\left( 1-\lvert x\rvert^2\right)^2}, $$
	and 
	$$\beta=\dfrac{\langle x, y \rangle}{\left( 1-\lvert x\rvert^2\right)^2}. $$
	Equation (\ref{s1}) with the condition  $\lVert \beta\rVert_\alpha <1 $ satisfies Lemma \ref{Shenlemma} which is a  necessary and sufficient condition for $F$ 
	to be a  Finsler  metric. Therefore, the  metric given by equation  (\ref{s1}) is an $ (\alpha, \beta)$-metric.
	An $(\alpha, \beta)$-metric expressed in the form (\ref{s1}) is called  square metric \cite{ZSheCYu2014}. Square metrics play an important role in Finsler geometry. The importance of square metric can be seen in papers \cite{ZSheCYu2014, J2QXia2016, CYuHZhu2015}. Square metrics can also be expressed in the form 
	$$F=\dfrac{\left( \sqrt{\left( 1-b^2\right)\alpha^2+\beta^2 }+\beta \right)^2 }{\left( 1-b^2\right)^2 \sqrt{\left( 1-b^2\right)\alpha^2+\beta^2 }}, $$
	where $ b:= \lVert \beta_x\rVert_\alpha $ is the length of $\beta$ \cite{CYuHZhu2015}. \\
	In this case, $F=\alpha \phi\left( b^2, \dfrac{\beta}{\alpha}\right) ,$ where $\phi = \phi (b^2,s) $ is a smooth function, is called general $(\alpha, \beta)$-metric.
	If $\phi= \phi(s)$ is independent of $b^2,$ then $F$ is called an $(\alpha, \beta)$-metric.\\
	\noindent	If  $(M, F)$ be an $n$-dimensional Finsler space and $\beta=b_i(x) y^i$ is a $1$-form on $M.$ Then $ F \longrightarrow \bar{F} $ is called Randers change if 
		\begin{equation}{\label{s2}}
		\bar{F}=F +\beta 
		\end{equation}
	Above  change 
	 has been introduced by  Matsumoto \cite{M.Mat1974}, and it was named as ``Randers change" by  Hashiguchi and  Ichijy$\bar{o}$ \cite{MHasYIch1980}. 
	In the current paper, we deal with	square metrics 
		 $$F= \dfrac{(\alpha+\beta)^2}{\alpha}= \alpha \phi(s), \text{where} \ \ \phi(s)=1+s^2+2s.$$
	\noindent The paper is organized as follows:\\
	In section 2, we discuss some basic definitions and results to be used in consequent sections.  The explicit formulae  for Ricci curvature of  homogeneous Finsler spaces with square metric and Randers change of square metric  have been established in section 3. In section 4, we find a necessary and sufficient condition under which a homogeneous Finsler space with either of these metrics  is of vanishing $S$-curvature. Also, a formula for Ricci curvature of a homogeneous Finsler space with each  metric having vanishing $S$-curvature is established. Finally, we prove that the aforesaid spaces having vanishing $S$-curvature and negative Ricci curvature must be Riemannian. 
	\section{Preliminaries}
	In this section, we discuss some basic definitions and results required for consequent sections.  We refer \cite{BCS, ChernShenRFG, SDeng2012} for notations and further details.
	\begin{definition}
		An n-dimensional real vector space $V$ is said to be  a \textbf{Minkowski space}
		if there exists a real valued function $F:V \longrightarrow [0,\infty),$ called Minkowski norm, satisfying the following conditions: 
		\begin{itemize}
			\item  $F$ is smooth on $V \backslash \{0\},$ 
			\item $F$ is positively homogeneous, i.e., $ F(\lambda v)= \lambda F(v), \ \ \forall \ \lambda > 0, $
			\item For any basis $\{u_1,\ u_2, \,..., \ u_n\}$ of $V$ and $y= y^iu_i \in V,$ the Hessian matrix \\
			$\left( g_{_{ij}}\right)= \left( \dfrac{1}{2} F^2_{y^i y^j} \right)  $ is positive-definite at every point of $V \backslash \{0\}.$
		\end{itemize}
	\end{definition}
	\begin{definition}	
		Let $M$ be a connected   smooth manifold. If there exists a function $F: TM \longrightarrow [0, \infty)$ which  is smooth on slit tangent bundle $TM \backslash \{0\}$ and the restriction of $F$ to any $T_xM, \ x \in M,$ is a Minkowski norm, then $M$ is called a Finsler space  and  $F$ is called a Finsler metric. 
	\end{definition}
	An $(\alpha, \beta)$-metric on a connected smooth manifold $M$ is a Finsler metric $F$ constructed from a Riemannian metric $ \alpha = \sqrt{a_{ij}(x) y^i y^j} $ and a one-form $ \beta = b_i(x) y^i $ on $M$ and is of the form  $ F= \alpha \phi \left( \dfrac{\beta}{\alpha}\right),$ where $ \phi  $ is a smooth function on $M.$ Basically,  $(\alpha, \beta)$-metrics are the generalization of Randers metrics.   Many authors  (for example, see   \cite{SDenXWan2010, ZHuSDen2012, GK2019,GSK2019, MXuSDen2015}) have worked on $(\alpha, \beta)$-metrics.	 Let us recall following (Shen's) lemma	   which gives necessary and sufficient condition for a function of  $\alpha$ and  $\beta$  to be a Finsler metric: 
	\begin{lemma} {\label{Shenlemma}} \cite{ChernShenRFG}
		Let $F=\alpha \phi(s), \ s=\beta/ \alpha,$ where $\phi $ is a smooth function on an open interval  $ (-b_0, b_0), \ \alpha$ is a Riemannian metric and $\beta$ is a 1-form with  $\lVert \beta \rVert_{\alpha} < b_0.$ Then $F$ is a Finsler metric if and only if the following conditions are satisfied:
		$$ \phi(s) > 0, \ \  \phi(s)-s\phi'(s)+\left( b^2-s^2\right) \phi''(s)>0, \ \  \forall \ \ \lvert s\rvert \leq b < b_0.$$
	\end{lemma}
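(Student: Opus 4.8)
The plan is to translate ``$F=\alpha\phi(s)$ is a Finsler metric'' into positive definiteness of its fundamental tensor $g_{ij}=\tfrac12(F^2)_{y^iy^j}$ at one arbitrary point, and then to diagonalise that tensor on a well-chosen orthonormal frame so that everything reduces to a $2\times 2$ problem. Fix $x\in M$ and choose coordinates with $a_{ij}(x)=\delta_{ij}$; write $b_i=b_i(x)$, $b=\lVert\beta_x\rVert_\alpha$, and for $y\ne 0$ set $\alpha=\sqrt{\delta_{ij}y^iy^j}$, $s=\beta/\alpha\in[-b,b]\subset(-b_0,b_0)$, $\bar y_i=y_i/\alpha$. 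Positive $1$-homogeneity of $F$ is clear, and $F$ is smooth off the zero section because $\lvert s\rvert\le b<b_0$ keeps $s$ inside the domain of $\phi$ while $\alpha>0$; moreover $F\ge 0$ with $F>0$ away from $0$ forces $\phi>0$. So the entire statement comes down to showing $(g_{ij})$ is positive definite for every $y\ne0$ exactly when $\phi>0$ and $\phi-s\phi'+(b^2-s^2)\phi''>0$.

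First I would compute $g_{ij}$. From $\alpha_{y^i}=\bar y_i$, $s_{y^i}=(b_i-s\bar y_i)/\alpha$ and $\alpha_{y^iy^j}=(a_{ij}-\bar y_i\bar y_j)/\alpha$ one finds $F_{y^i}=(\phi-s\phi')\bar y_i+\phi' b_i$, and then $g_{ij}=F_{y^i}F_{y^j}+FF_{y^iy^j}$ works out to
\[ g_{ij}=\rho\,a_{ij}+\rho_0\,b_ib_j+\rho_1(b_i\bar y_j+b_j\bar y_i)-s\rho_1\,\bar y_i\bar y_j, \]
where $\rho=\phi(\phi-s\phi')$, $\rho_0=(\phi')^2+\phi\phi''$ and $\rho_1=\phi'(\phi-s\phi')-s\phi\phi''$. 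It is worth recording at this point the identities $\rho_0 s+\rho_1=\phi\phi'$ and $\rho_0(\phi-s\phi')-\rho_1\phi'=\phi^2\phi''$, which are what make the later algebra collapse.

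Next I would diagonalise. For fixed $y\ne0$, splitting an arbitrary vector $v$ as $v=v_\parallel+v_\perp$ with $v_\parallel$ in $\Pi:=\operatorname{span}\{y,b\}$ and $v_\perp\perp\Pi$, and noting that the $b_i$- and $\bar y_i$-contractions only see $v_\parallel$, gives $g(v,v)=\rho\,\lvert v_\perp\rvert^2+g(v_\parallel,v_\parallel)$; hence $(g_{ij})$ is positive definite iff $\rho>0$ and the restriction of $g$ to $\Pi$ is positive definite. Take the orthonormal basis $e_1=y/\alpha$, $e_2=(b-se_1)/\sqrt{b^2-s^2}$ of $\Pi$ when $s^2<b^2$ (when $s^2=b^2$, i.e.\ $y\parallel b$, the plane degenerates and only $\rho>0$ together with the $e_1$-entry survive). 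In this basis $\bar y\leftrightarrow(1,0)$ and $b\leftrightarrow(s,\sqrt{b^2-s^2})$, and substituting into the displayed formula, using $\rho_0 s+\rho_1=\phi\phi'$, the $2\times2$ matrix of $g|_\Pi$ has $(1,1)$-entry equal to $\phi^2$ and determinant equal to $\phi^3\bigl(\phi-s\phi'+(b^2-s^2)\phi''\bigr)$. So, given $\phi>0$, positive definiteness of $g|_\Pi$ is equivalent to $\phi-s\phi'+(b^2-s^2)\phi''>0$; and taking $b=\lvert s\rvert$ in this inequality returns $\phi-s\phi'>0$, which is exactly $\rho>0$. Therefore the two inequalities in the statement, holding for all $\lvert s\rvert\le b<b_0$, are necessary and sufficient for $(g_{ij})$ to be positive definite for all admissible $\alpha,\beta,y$.

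I expect the laborious step to be the $2\times2$ determinant: after expanding $M\rho+(b^2-s^2)\bigl[\rho_0\rho-\rho_1(\rho_0 s+\rho_1)\bigr]$ with $M$ the $(1,1)$-entry, one must see that the $(b^2-s^2)$-free part is $\phi^2\rho=\phi^3(\phi-s\phi')$ and that the coefficient of $b^2-s^2$ is $\phi^3\phi''$, which is precisely where the identity $\rho_0(\phi-s\phi')-\rho_1\phi'=\phi^2\phi''$ is used. The only other points needing care are the degenerate alignment $y\parallel b$ and the low dimensions $n=1,2$, where $\Pi^\perp$ may be trivial; both are dispatched by reading the same formulae directly.
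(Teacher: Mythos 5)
The paper does not prove this lemma at all: it is quoted verbatim from Chern--Shen \cite{ChernShenRFG} as a known result, so there is no in-paper argument to compare against. Your proposal reconstructs the standard proof correctly -- the fundamental tensor $g_{ij}=\rho\,a_{ij}+\rho_0 b_ib_j+\rho_1(b_i\bar y_j+b_j\bar y_i)-s\rho_1\bar y_i\bar y_j$ is right, the identities $\rho_0 s+\rho_1=\phi\phi'$ and $\rho_0(\phi-s\phi')-\rho_1\phi'=\phi^2\phi''$ check out, and the reduction to $\rho>0$ on $\operatorname{span}\{y,b\}^{\perp}$ together with the $2\times2$ block having $(1,1)$-entry $\phi^2$ and determinant $\phi^3\bigl(\phi-s\phi'+(b^2-s^2)\phi''\bigr)$ is exactly how the result is established in the source, including the observation that setting $b=\lvert s\rvert$ recovers $\phi-s\phi'>0$ and the care taken with $y\parallel b$ and low dimensions.
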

\subsection{Ricci curvature of an $(\alpha, \beta)$-metric}
The notion of Riemannian curvature for Riemannian spaces can be extended to Finsler spaces.  For an $n$-dimensional Finsler space $(M,F),\ x \in M, \ y (\neq 0) \in T_xM,$ the Riemannian curvature is a linear map $ R_y : T_xM \longrightarrow T_xM $ defined as 
$$ R_{y}(v)= R^i_k(y)v^k \dfrac{\partial}{\partial x^i}, \ \ v= v^i \dfrac{\partial}{\partial x^i},$$
where
$$R^i_k(y)= 2 \dfrac{\partial G^i}{\partial x^k}- \dfrac{\partial^2G^i}{\partial x^j \partial y^k}y^j + 2 G^j \dfrac{\partial^2 G^i}{\partial y^j \partial y^k}- \dfrac{\partial G^i}{\partial y^j}\dfrac{\partial G^j}{\partial y^k}, $$
and $ G^i$  are geodesic coefficients  given by 
$$ G^i \ := \dfrac{1}{4} g^{im} \left\{ \left( F^2\right)_{x^k y^m} y^k - \left( F^2\right)_{x^m}  \right\}, \ \ i=1,2, ...,n. $$

\begin{definition}
	Let $(M,F)$ be an n-dimensional Finsler space. For $ x \in M $ and non-zero $ y \in T_xM, $ the map defined on tangent bundle $TM$ of $M$ as 
	$$ Ric(y)= tr(R_{y}) \ \forall \ y \in T_xM$$
	is called \textbf{Ricci-curvature} of $(M,F).$
\end{definition}
%
%
For an $(\alpha, \beta)$-metric $F=\alpha \phi (s),\ s =\dfrac{\beta}{\alpha}$ on  a Finsler manifold $M,$  let us take
\begin{equation*}
\begin{aligned}
r_{ij}&= \dfrac{b_{i;j}+ b_{j;i} }{2},\ \ \ \ \ \ \ \ \ \ \ \ \ \ \ \ \ \ \ \ \ \ \ \ \ \ \ \ \ \ \ \ \ \ \ \   && s_{ij}= \dfrac{b_{i;j} - b_{j;i} }{2}, \ \ \ \ \ \ \ \ \ \ \ \ \ \ \ \ \ \ \ \ \ \ \ \ \ \ \  \\
r^i_j&= a^{i{\ell}} r_{{\ell}j}, \ \ && s^i_j= a^{i{\ell}} s_{{\ell}j},  \\ 
r_i&=b^{\ell}r_{{\ell}i}=b_j r^j_i, \ \ && s_i=b^{\ell}s_{{\ell}i}=b_j s^j_i,\\
r&= r_{ij}b^ib^j= b^ir_i, \ \ && r_{00}=r_{ij}y^iy^j, \\ 
r_{i0}&=r_{ij}y^j,\ \ &&s_{i0}=s_{ij}y^j,\\ 
r_0&=r_i y^i,\ \ && s_0=s_iy^i,
\end{aligned}
\end{equation*}
where $ ; $ denotes the covariant derivative w.r.t. Levi-Civita connection on Riemannian metric $\alpha$ and $ a^{ij}= \left( a_{ij}\right) ^{-1}, \ \ b^i=a^{ij}b_j. $
 \begin{theorem}\label{riccicur}   \cite{CST2012}
 	Let  F be an $(\alpha,\beta)$-metric on a Finsler space $M$ and $^\alpha Ric $ be Ricci curvature of $\alpha.$
 	Then  Ricci curvature  of $F$ is given by $ Ric= ^\alpha Ric + RT^{l}_{l},$ with
 	\begin{align*}
 	RT^{l}_{l}=& \frac{1} {\alpha^2}\Big \{\left(n-1 \right)\zeta_1 + \zeta_2\Big \} r^2_{00} \\
 	& + \dfrac{1}{\alpha} \Big [\Big( \left(n-1 \right) \zeta_3+\zeta_4 \Big) r_{00}s_{0} + \Big( \left(n-1 \right) \zeta_5+\zeta_6 \Big) r_{00}r_{0}+ \Big( \left(n-1 \right) \zeta_7+\zeta_8 \Big) r_{00;0}\Big] \\
 	&+\Big(  \left(n-1 \right)\zeta_9 + \zeta_{10}\Big) s^2_0 + \left(rr_{00}-r^2_0 \right)\zeta_{11} + \Big(\left(n-1 \right)\zeta_{12} + \zeta_{13}\Big) r_0 s_0 \\
 	& + \Big( r_{00}r^{l}_{l}-r_{0{l}}r^{l}_0+r_{00;{l}}b^{l}-r_{0{l};0}b^{l}\Big)\zeta_{14} +\Big(\left(n-1 \right)\zeta_{15} + \zeta_{16}\Big) r_{0{l}}s^{l}_0 \\
 	& +\Big(\left(n-1 \right)\zeta_{17} + \zeta_{18}\Big) s_{0;0} +  s_{0{l}}s^{l}_0 \zeta_{19}+ \alpha \Big[ rs_0\ \zeta_{20}+ \Big(\left(n-1 \right)\zeta_{21} + \zeta_{22}\Big) s_{l} s^{l}_0 \Big] \\
 	&+ \alpha \Big( ( 3s_{l}r^{l}_0-2s_0r^{l}_{l}+2r_{l}s^{l}_0-2s_{0;{l}}b^{l}+s_{{l};0}b^{l})\ \zeta_{23} +s^{l}_{0;{l}}  \zeta_{24} \Big) \\
 	&+ \alpha^2 \left( s_{l}s^{l}\ \zeta_{25}+s^i_{l}s^{l}_i\ \zeta_{26}\right), 
 	\end{align*}
 	where
 \begin{align*}
\zeta_1=&2\,\psi\,\Theta_{{s}} \left( B-{s}^{2} \right) -2\,s\psi\,\Theta+{
	\Theta}^{2}-\Theta_{{s}},\\
\zeta_2=&2\,\psi\,\psi_{{{\it ss}}} \left( B-{s}^{2} \right)^{2} - \left( 6\,s\psi
\,\psi_{{s}}+\psi_{{{\it ss}}} \right)  \left( B-{s}^{2} \right) +2\,s
\psi_{{s}},\\
\zeta_3=&  -4\, \left( 2\,Q\Theta_{{s}}+Q_{{s}}\Theta \right) \psi\, \left( B-{s}
^{2} \right) +4\,Q\Theta_{{s}}+2\,Q_{{s}}\Theta+4\,Q\Theta\, \left( s
\psi-\Theta \right) -2\,\Theta_{{B}}, \\
\zeta_4=& -4\,\psi\, \left( 2\,Q\psi_{ss}+Q_s\psi_{{s}}+Q_{ss}{
	\psi_{{s}}}^{2} \right)  \left( B-{s}^{2} \right) ^{2}\\&+ \left( -4\,{
	\psi}^{2} \left( Q-sQ_{{s}} \right) +4\,Q_{{{\it ss}}}\psi+2\,Q_{{s}}
\psi_{{s}}+4\,Q\psi_{{{\it ss}}}-2\,\psi_{{{\it sB}}}+20\,sQ\psi\,\psi
_{{s}} \right)  \left( B-{s}^{2} \right)\\& +2\,\psi\, \left( Q-sQ_{{s}}
\right) -4\,\psi_{{s}}-Q_{{{\it ss}}}-10\,sQ\psi_{{s}}, \\
\zeta_5=& 4\,\psi\,\Theta-2\,\Theta_{{B}}, \\
\zeta_6=& 2\, \left( 2\,\psi\,\psi_{{s}}-\psi_{{{\it sB}}} \right)  \left( B-{s}
^{2} \right) -2\,\psi_{{s}}, \\
\zeta_7=& -\Theta, \\
	\end{align*}
\begin{align*}
\zeta_8=& -\psi_s\ (B-s^2),  \\
\zeta_9=& 8\,Q\psi\, \left( Q\Theta_{{s}}+Q_{{s}}\Theta \right)  \left( B-{s}^{2
} \right) +4\,{Q}^{2} \left( {\Theta}^{2}-\Theta_{{s}} \right) +4\,Q
\left( \Theta_{{B}}-Q_{{s}} \right),  \\
\zeta_{10}=& \left( 4\,{\psi}^{2} \left( 2\,QQ_{{{\it ss}}}-{Q_{{s}}}^{2} \right) 
+8\,Q\psi\, \left( Q\psi_{{{\it ss}}}+Q_{{s}}\psi_{{s}} \right) -4\,{Q
}^{2}{\psi_{{s}}}^{2} \right)  \left( B-{s}^{2} \right) ^{2}\\&+ \left( -
16\,sQ\psi\, \left( Q\psi_{{s}}+Q_{{s}}\psi \right) -4\,\psi\, \left( 
2\,QQ_{{{\it ss}}}-{Q_{{s}}}^{2} \right) -4\,Q \left( Q\psi_{{ss}}+Q_{{s}}\psi_{{s}} \right) +4\,Q\psi_{{{ sB}}}+4\,Q_{{s}}\psi_{{B
}} \right)  \left( B-{s}^{2} \right)\\& -4\,{s}^{2}{Q}^{2}{\psi}^{2}+4\,
\left( 2+3\,sQ \right)  \left( Q\psi_{{s}}+Q_{{s}}\psi \right) -8\,{Q
}^{2}\psi+2\,QQ_{{{\it ss}}}-{Q_{{s}}}^{2}+4\,sQ\psi_{{B}}, \\
\zeta_{11}=& 4\,\psi^2+4\,\psi_B,  \\
\zeta_{12}=& 4\,Q\,(-2\, \psi \Theta + \Theta_{{B}}), \\
\zeta_{13}=& \Big( 8\,\psi\, \left( Q_{{s}}\psi-Q\psi_{{s}} \right) +4\,Q\psi_{{ sB}}+4\,Q_{{s}}\psi_{{B}} \Big)  \left( B-{s}^{2} \right) +8\,sQ{\psi}^{2}+4\,Q\psi_{{s}}-4\, \left( 1-sQ \right) \psi_{{B}}, \\
\zeta_{14}=& 2\, \psi,  \\
\zeta_{15}=& 4\,Q\, \Theta, \\
\zeta_{16}=& 4\, \left( Q\psi_{{s}}-Q_{{s}}\psi \right)  \left( B-{s}^{2} \right) + 2\,Q_{{s}}-2\, \left( 1+2\,sQ \right) \psi, \\
\zeta_{17}=& 2\,Q\, \Theta, \\
\zeta_{18}=& 2\, \left( Q\psi_{{s}}+Q_{{s}}\psi \right)  \left( B-{s}^{2} \right) + 2\,sQ\psi-Q_{{s}}, \\
\zeta_{19}=& 2\, \left( 1+sQ \right) Q_{{s}}-2\,{Q}^{2}, \\  
\zeta_{20}=& -8\, \left( {\psi}^{2}+\psi_{{B}} \right) Q, \\
\zeta_{21}=& -4\,{Q}^{2}\Theta,  \\
\zeta_{22}=&  2\,Q\psi-4\,{Q}^{2}\psi_{{s}} \left( B-{s}^{2} \right),  \\
\zeta_{23}=& 2\, Q\, \psi,  \\
\zeta_{24}=& 2\, Q,  \\
\zeta_{25}=&  -4\,{Q}^{2}\psi,  \\
\zeta_{26}=&   -Q^2, \\
Q=&\dfrac{\phi'}{\phi-s\phi'},\\
\Theta=&\dfrac{\phi \phi'-s(\phi\phi''+\phi'\phi')}{2\phi\left(\phi-s\phi' +(B-s^2)\phi'' \right) },\\
\psi=& \dfrac{\phi''}{2\phi\left(\phi-s\phi' +(B-s^2)\phi'' \right)},\\
B=&b^2.
\end{align*}
\end{theorem}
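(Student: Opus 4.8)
The plan is to derive this identity by the route that Cheng, Shen and Tian \cite{CST2012} use for curvature computations of $(\alpha,\beta)$-metrics: first write down the spray coefficients of $F=\alpha\phi(s)$, then substitute them into the definition of $R^{i}_{k}$, and finally contract. All indices are raised and lowered with $\alpha$, and ``${}_{;}$'' denotes the Levi-Civita connection of $\alpha$. Starting from $G^{i}=\tfrac14 g^{im}\{(F^{2})_{x^{k}y^{m}}y^{k}-(F^{2})_{x^{m}}\}$ together with the known form of the inverse fundamental tensor $g^{im}$ of an $(\alpha,\beta)$-metric, one obtains the classical decomposition
\begin{equation*}
G^{i}=G^{i}_{\alpha}+\alpha\,Q\,s^{i}_{\ 0}+\bigl(r_{00}-2\alpha Q s_{0}\bigr)\!\left(\psi\,b^{i}+\Theta\,\frac{y^{i}}{\alpha}\right),
\end{equation*}
where $G^{i}_{\alpha}$ are the geodesic coefficients of $\alpha$ and $Q,\Theta,\psi$ are the functions of $(s,B)$ listed in the statement (in the conventions of \cite{CST2012}). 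Write $G^{i}=G^{i}_{\alpha}+T^{i}$; the deviation $T^{i}$ is homogeneous of degree two in $y$ and is assembled entirely from $\alpha,\beta,y^{i},b^{i}$ and the tensors $r_{ij},s_{ij}$.

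Next I would insert $G^{i}=G^{i}_{\alpha}+T^{i}$ into
\begin{equation*}
R^{i}_{k}=2\,\partial_{x^{k}}G^{i}-\partial_{y^{k}}\partial_{x^{j}}G^{i}\,y^{j}+2G^{j}\,\partial_{y^{j}}\partial_{y^{k}}G^{i}-\partial_{y^{j}}G^{i}\,\partial_{y^{k}}G^{j}.
\end{equation*}
Since $R^{i}_{k}$ is quadratic, not linear, in the spray, the output is ${}^{\alpha}R^{i}_{k}$ plus terms linear in $T^{i}$ (carrying $x$-derivatives of $T^{i}$), the mixed $G^{i}_{\alpha}$--$T^{i}$ terms, and the terms quadratic in $T^{i}$. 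Each coordinate derivative $\partial_{x}$ acting on $T^{i}$ is then rewritten via the Levi-Civita covariant derivative of $\alpha$; the Christoffel symbols produced this way recombine with the mixed terms so that the non-Riemannian part becomes a genuine tensor whose ingredients are $r_{ij},s_{ij}$ and their first covariant derivatives --- this is where the quantities $r_{00;0}$, $r_{00;\ell}b^{\ell}$, $r_{0\ell;0}b^{\ell}$, $s_{0;0}$, $s^{\ell}_{0;\ell}$ of the statement come from. (Equivalently, one could contract the corrected Riemann-curvature formula for $(\alpha,\beta)$-metrics; this is the same computation.)

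Then I would put $k=i$, contract, and write $Ric={}^{\alpha}Ric+RT^{\ell}_{\ell}$. Organizing $RT^{\ell}_{\ell}$ by homogeneity degree in $y$, a deviation term can enter only with an overall factor $\alpha^{-2},\alpha^{-1},\alpha^{0},\alpha^{1}$ or $\alpha^{2}$, and these are precisely the five blocks displayed. Inside each block, the part proportional to $n-1$ is the one produced by contractions along the $y^{i}/\alpha$ direction (the trace of the identity on the $(n-1)$-dimensional complement of $y$), the remainder being the companion coefficients; matching the coefficients of the invariants $r_{00}^{2}$, $r_{00}s_{0}$, $r_{00}r_{0}$, $r_{00;0}$, $s_{0}^{2}$, $rr_{00}-r_{0}^{2}$, $\ldots$, $s^{i}_{\ell}s^{\ell}_{i}$ then identifies every $\zeta_{j}$ as a scalar function of $s$ and $B$ built from $\phi$ by repeated $s$- and $B$-differentiation of $Q,\Theta,\psi$.

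I expect the last step to be the real obstacle: there is no conceptual difficulty, but the algebra is enormous, one must keep the mixed $s$- and $B$-derivatives of $Q,\Theta,\psi$ straight, and one must correctly pair the covariant-derivative terms thrown off by the connection in the previous step --- this is exactly the point at which the earlier formulas of \cite{LZho2010} were found to be wrong. A useful consistency check, once the $\zeta_{j}$ are in hand, is that $RT^{\ell}_{\ell}$ is homogeneous of degree two in $y$ and collapses to $0$ when $\beta$ is parallel with respect to $\alpha$ (so $r_{ij}=s_{ij}=0$), reducing the formula to $Ric={}^{\alpha}Ric$.
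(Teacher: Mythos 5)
The paper offers no proof of this theorem at all: it is imported verbatim from \cite{CST2012} and used as a black box, so the only thing to compare your proposal against is the derivation in that reference. Your outline is faithful to it in strategy --- the decomposition $G^{i}=G^{i}_{\alpha}+\alpha Q s^{i}_{\ 0}+(r_{00}-2\alpha Q s_{0})(\psi b^{i}+\Theta y^{i}/\alpha)$ is the standard spray formula for an $(\alpha,\beta)$-metric, substituting it into $R^{i}_{k}$, covariantizing the $x$-derivatives so that only $r_{ij}$, $s_{ij}$ and their first covariant derivatives survive, and then tracing and sorting by powers of $\alpha$ is exactly how the corrected formula was obtained.

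The gap is that the entire mathematical content of the theorem is the explicit list of the twenty-six coefficients $\zeta_{1},\dots,\zeta_{26}$, and your proposal derives none of them. You correctly identify the structural skeleton (which invariants can appear, and in which $\alpha$-block), but the skeleton is not in dispute --- Zhou's earlier, incorrect formula has the same skeleton, and the whole point of \cite{CST2012} was to get the $\zeta_{j}$ right. Your stated mechanism for the $(n-1)$ split (``contractions along the $y^{i}/\alpha$ direction'') is a heuristic rather than an argument: the $(n-1)$ factors arise from specific contractions such as $\partial_{y^{i}}(y^{i}/\alpha)\cdot(\cdot)$ producing $n$ minus rank-one corrections, and pinning down which part of each coefficient carries the $(n-1)$ requires actually performing the trace. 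Likewise, the proposed consistency check (that $RT^{\ell}_{\ell}$ vanishes when $\beta$ is parallel) is automatically satisfied by \emph{any} expression built from $r$'s and $s$'s, including the wrong one, so it cannot certify the coefficients. As a description of the method your proposal is accurate; as a proof of the stated identity it stops exactly where the difficulty begins.
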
 
\subsection{Homogeneous Finsler spaces}

Let $G$ be a Lie group and $M,$ a smooth manifold. Then a smooth  map $ f: G \times M \longrightarrow M  $ satisfying
$$ f(g_2, f(g_1,x)) =f(g_2g_1, x), \ \ \text{for all }\ \ g_1, g_2 \in G, \ \ \text{and }\ \ x \in M$$
is called a smooth action of $G $ on $M.$


\begin{definition}
	Let $ M $ be a smooth manifold and  $ G, $ a Lie group. If $ G $ acts smoothly  on $ M, $ then $ G $ is called a \textbf{Lie transformation group} of $ M. $
\end{definition}
The following theorem  gives us a differentiable structure on the coset space of a Lie group.
\begin{theorem}{\label{DiffStrct.}}
	Let $G$ be a Lie group and $H,$ its  closed subgroup. Then there exists a unique differentiable structure on the left coset space $G/H$ with the induced topology that turns $G/H$ into a smooth manifold such that $G$ is a Lie transformation group of $G/H.$
\end{theorem}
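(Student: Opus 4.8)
The plan is to manufacture an atlas on $G/H$ by building one chart at the base point $o=eH$ from the exponential map and then transporting it around $G/H$ by the left action of $G$, after which one checks that the transition maps and the action map are smooth. By Cartan's closed subgroup theorem the closed subgroup $H$ is an embedded Lie subgroup of $G$; write $\mathfrak g,\mathfrak h$ for the Lie algebras of $G,H$ and fix a vector-space complement $\mathfrak m$ with $\mathfrak g=\mathfrak h\oplus\mathfrak m$. Give $G/H$ the quotient topology and let $\pi:G\to G/H$ be the projection; then $\pi$ is open, since $\pi^{-1}\pi(W)=WH$ is open for every open $W\subseteq G$. Because $H$ is closed, the relation $\{(x,y)\in G\times G:x^{-1}y\in H\}$ is closed, so $G/H$ is Hausdorff, and it is second countable since $G$ is.

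First I would produce the chart at $o$. Consider $\Phi:\mathfrak m\times H\to G$, $\Phi(X,h)=(\exp X)h$; it is smooth and its differential at $(0,e)$ is the canonical isomorphism $\mathfrak m\oplus\mathfrak h\to\mathfrak g$, because $d(\exp)_0=\mathrm{id}$ and $T_eH=\mathfrak h$. The crucial technical step — the only place closedness of $H$ is really used — is to shrink to a neighborhood $U$ of $0$ in $\mathfrak m$ on which $X\mapsto\exp X$ is a genuine local section of $\pi$, i.e. the cosets $(\exp X)H$ for $X\in U$ are pairwise distinct and their union is open in $G/H$. Granting this, $\psi:=\pi\circ\exp|_{U}:U\to G/H$ is a homeomorphism onto an open neighborhood of $o$, which I declare to be a chart.

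Next I would translate this chart: for $g\in G$ put $\psi_g(X)=g\cdot\psi(X)=\pi(g\exp X)$, a homeomorphism of $U$ onto an open neighborhood of $gH$, and these sets cover $G/H$. For an overlap I would apply the implicit function theorem to $\Phi$: near any point of the overlap one can solve $g_1\exp X=g_2\exp(Y(X))\,h(X)$ with smooth $Y:U'\to\mathfrak m$ and $h:U'\to H$, whence $\psi_{g_2}^{-1}\circ\psi_{g_1}(X)=Y(X)$ is smooth. Thus $\{\psi_g(U),\ \psi_g^{-1}\}_{g\in G}$ is a smooth atlas, and with the resulting smooth structure $\pi$ becomes a smooth submersion.

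Finally I would verify that the action $f:G\times G/H\to G/H$ is smooth and that the structure is unique. Smoothness of $f$ is a local question: reading a point near $g'gH$ in the chart $\psi_{g'g}$ and combining smoothness of multiplication in $G$ with the implicit-function representation above exhibits $f$ as a composition of smooth maps. For uniqueness, if $G/H$ carried another smooth structure with the same properties, then $\pi$ would be a surjective submersion onto each copy, so by the universal property of surjective submersions the identity map and its inverse would both be smooth, forcing the two structures to coincide. I expect the single real obstacle to be the bracketed assertion in the second paragraph — that $\exp$ restricts to a local section of $\pi$ near $o$ — since this is precisely where the hypothesis that $H$ is closed enters, guaranteeing that the cosets $(\exp X)H$ are pairwise distinct for $X$ in a small enough neighborhood of $0$.
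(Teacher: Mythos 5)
The paper does not prove this statement at all: it is quoted as a classical background fact (the standard theorem on homogeneous coset manifolds, found in Helgason, Warner, or Deng's monograph), so there is no in-paper argument to compare yours against. Your outline is the standard textbook proof and its architecture is sound: Cartan's closed subgroup theorem, the splitting $\mathfrak g=\mathfrak h\oplus\mathfrak m$, the local diffeomorphism $\Phi(X,h)=(\exp X)h$, translation of the resulting chart by the $G$-action, smoothness of transitions via the local product structure, and uniqueness from the universal property of the surjective submersion $\pi$. The Hausdorff argument via openness of $\pi$ and closedness of the relation $\{(x,y):x^{-1}y\in H\}$ is also correct.

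The one place your write-up is incomplete is exactly the step you flag and then ``grant'': that for $U$ small enough the cosets $(\exp X)H$, $X\in U$, are pairwise distinct, so that $\pi\circ\exp|_U$ is injective. Since you identify this as the crux, you should close it: by the inverse function theorem $\Phi$ restricts to a diffeomorphism of $U_0\times V$ (with $V$ a neighborhood of $e$ in $H$) onto an open set $W\subseteq G$, so the only failure of injectivity would come from some $h=\exp(-Y)\exp(X)\in H\setminus V$ with $X,Y\in U$. Choose a compact neighborhood $K$ of $e$ in $G$; then $(H\cap K)\setminus V$ is compact (here closedness of $H$ in $G$ is used) and does not contain $e$, hence is bounded away from $e$, while $\exp(-Y)\exp(X)\to e$ uniformly as $U$ shrinks and remains inside $K$; so for $U$ small enough no such $h$ exists. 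With that lemma supplied, your proposal is a complete and correct proof of the stated theorem.
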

\begin{definition}
	Let $ (M, F)$  be a  connected Finsler space and  $ I(M, F)$ the group of isometries of $(M,F).$ If the action of $I(M,F)$ is transitive on $M,$ then $(M,F)$  is said to be a \textbf{homogeneous Finsler space}. 
\end{definition}


Let $G$ be a Lie group acting transitively on  a smooth manifold $M.$ Then for $a \in M,$ the isotropy subgroup $G_a$ of $G$ is a closed subgroup and by Theorem  \ref{DiffStrct.}, $G$ is a Lie transformation group of $ G/G_a.$ Further,  $ G/G_a $ is diffeomorphic to $M.$ 

\begin{theorem}\cite{SDeng2012}
	Let $(M,F)$ be a Finsler space. Then $G=I(M,F),$ the group of isometries  of $M$ is a Lie transformation group of $M.$ Let $ a \in M$ and $ I_a(M,F)$ be the isotropy subgroup of $I(M,F)$ at $a.$ Then $ I_a(M,F)$ is compact.
\end{theorem}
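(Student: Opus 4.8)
The final statement is the Finsler-geometric analogue of the Myers--Steenrod theorem together with the classical fact that isotropy groups of proper actions are compact. Since it is quoted from \cite{SDeng2012}, in practice the first (and hardest) ingredient is invoked as a black box; I would organize a self-contained argument into three stages, the last two of which are short.

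\emph{Stage 1: $G=I(M,F)$ is a Lie transformation group.} Equip $G$ with the compact--open topology. Every $\phi\in G$ preserves the integrated distance $d_F$ attached to $F$, hence is a distance-preserving bijection of $(M,d_F)$. As in the Riemannian case one shows: (i) each isometry is smooth, and its differential at a point is a linear isometry of the Minkowski norm on that tangent space; (ii) an isometry is determined by its value and differential at one point, because it carries $F$-geodesics to $F$-geodesics and therefore satisfies $\phi\circ\exp_a=\exp_a\circ(d\phi)_a$ on a normal neighbourhood, so coincidence to first order at a point propagates to a neighbourhood and then, $M$ being connected, to all of $M$; (iii) the family $G$ is locally equicontinuous with relatively compact orbits in small metric balls, so Arzel\`a--Ascoli makes $G$ locally compact in the compact--open topology and shows that $G$ acts \emph{properly} on $M$. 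A locally compact effective transformation group with these properties then carries a unique Lie group structure rendering the action smooth (Montgomery--Zippin / Bochner--Montgomery theory of locally compact transformation groups). This is exactly the cited theorem.

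\emph{Stage 2: the isotropy representation is injective.} Fix $a\in M$ and set $\rho\colon I_a(M,F)\to GL(T_aM)$, $\rho(\phi)=(d\phi)_a$, a continuous homomorphism. Since $\phi$ fixes $a$ and carries $F$-geodesics through $a$ to $F$-geodesics through $a$, one has $\phi\circ\exp_a=\exp_a\circ(d\phi)_a$ near $0\in T_aM$; hence $\rho(\phi)=\mathrm{id}$ forces $\phi=\mathrm{id}$ near $a$, and since $\{x:\phi(x)=x,\ (d\phi)_x=\mathrm{id}\}$ is open and closed in the connected manifold $M$, $\phi=\mathrm{id}_M$. Thus $\rho$ is injective. \emph{Stage 3: the image is precompact, and conclusion.} For $\phi\in I_a(M,F)$ the map $(d\phi)_a$ preserves the Minkowski norm $F_a:=F|_{T_aM}$, so $\rho\big(I_a(M,F)\big)\subseteq O(F_a):=\{L\in GL(T_aM):F_a\circ L=F_a\}$. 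Picking any Euclidean norm $|\cdot|$ on $T_aM$ with $c_1|v|\le F_a(v)\le c_2|v|$, every $L\in O(F_a)$ satisfies $|Lv|\le(c_2/c_1)|v|$ and $|L^{-1}v|\le(c_2/c_1)|v|$, so $O(F_a)$ is a closed and bounded subgroup of $GL(T_aM)$, hence a compact Lie group. Finally $I_a(M,F)$ is a closed subgroup of the Lie group $I(M,F)$, hence a Lie group, and by the properness of the action from Stage 1 the set $I_a(M,F)\times\{a\}$ is the preimage of $(a,a)$ under the proper map $I(M,F)\times M\to M\times M$, $(g,x)\mapsto(gx,x)$; a preimage of a compact set under a proper map, it is compact, so $I_a(M,F)$ is compact. (Equivalently, properness makes $\rho$ a homeomorphism onto its image inside the compact group $O(F_a)$.)

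The genuine obstacle is Stage 1 — showing that the isometry group of a connected Finsler space is a Lie transformation group acting properly — which is the full Finsler Myers--Steenrod theorem and is taken from the literature; granted it, Stages 2 and 3 are routine, the only delicate point being the compatibility of the subspace topology on $I_a(M,F)$ (from $I(M,F)$) with the one for which the Arzel\`a--Ascoli/properness argument produces compactness.
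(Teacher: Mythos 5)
The paper offers no proof of this statement: it is imported verbatim from Deng's monograph \cite{SDeng2012}, so there is no in-paper argument to compare yours against. Your three-stage outline is, in substance, exactly the proof found in the cited literature (the Deng--Hou Finsler version of Myers--Steenrod for Stage 1, followed by the standard isotropy-representation argument), and the logical skeleton is sound: the only load-bearing external input is Stage 1, and given it, your Stages 2 and 3 are correct --- injectivity of $\rho$ via $\phi\circ\exp_a=\exp_a\circ(d\phi)_a$ and an open-closed argument, compactness of $O(F_a)$ as a closed bounded subgroup of $GL(T_aM)$, and compactness of $I_a(M,F)$ from closedness plus properness. Two Finsler-specific points deserve an explicit flag if you were to write this out in full. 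First, since $F$ is only positively homogeneous, $d_F$ is in general non-symmetric, so ``distance-preserving bijection of $(M,d_F)$'' must be phrased with forward distances (or the symmetrized distance), and forward/backward metric balls must be distinguished in the equicontinuity and orbit-precompactness steps. Second, the Finsler exponential map is only $C^1$ at the zero section (smooth away from it), so the smoothness of isometries and the identity $\phi\circ\exp_a=\exp_a\circ(d\phi)_a$ require a genuinely more careful argument than the Riemannian one --- this is precisely where Deng--Hou's work lies, and it is correctly identified by you as the genuine obstacle rather than glossed over. With those caveats your proposal is a faithful reconstruction of the proof the paper is implicitly relying on.
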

Let $ (M,F)$ be a  homogeneous Finsler space, i.e., $G= I(M,F)$ acts transitively on $M.$ For $a \in M,$ let $ H=I_a(M,F) $ be a closed isotropy subgroup of $G$ which is compact. Then $H$ is a Lie group itself being a closed subgroup of $G.$ Write $M$ as the quotient space $G/H.$
\begin{definition}\cite{Nomizu}{\label{defNomizu}}
	Let $ \mathfrak g$ and $ \mathfrak h $ be the Lie algebras of the Lie groups $G$ and $H$ respectively. Then the direct sum decomposition of $\mathfrak{g} $ as  $ \mathfrak g = \mathfrak h + \mathfrak k,$ where $\mathfrak k$ is a subspace of $\mathfrak g$   such that $\text{Ad}(h) (\mathfrak k) \subset \mathfrak k \ \ \forall \  h \in H,$ is called a reductive decomposition of $\mathfrak g,$ and if such decomposition exists, then $ (G/H, F) $ is called reductive homogeneous space.
\end{definition}
Therefore, we can write, any homogeneous Finsler space as a coset space of a connected Lie group with an invariant Finsler metric. Here, the Finsler metric $F$ is viewed as $G$ invariant Finsler metric on $M.$\\

 We can  identify $\mathfrak{k}$  with the tangent space $T_{H}\left( G/H\right)  $ of $G/H$  at the origin $H.$    Let $\left\lbrace  w_1, w_2, w_3, ... , w_n  =\dfrac{w}{c} \right\rbrace $ be the orthonormal basis of $\mathfrak{k},$
where $w$ is the invariant vector field of length $c$ corresponding to the $1$-form $\beta,$ i.e., $ \langle w, Y\rangle =\beta (Y),\  \forall \ Y \in  \mathfrak{k},$ and  $ \langle \ , \ \rangle$ is the restriction of Riemannian metric $\alpha$ to $\mathfrak{k}.$
\section{Ricci Curvature  of Homogeneous Finsler Spaces with  Certain $(\alpha, \beta)$-Metrics}
$\Gamma^m_{ij}$ are the  Christoffel symbols   given by 
$$\nabla_{\dfrac{\partial}{\partial x^i}} \dfrac{\partial}{\partial x^j}=\Gamma^m_{ij}\ \dfrac{\partial}{\partial x^m}.$$
To compute $\Gamma^k_{ij},$ we need following notations:\\
For $1\leq i, \ j, \ m \leq n,$ define the structure constants of $\mathfrak k$ as follows:  
$$C^m_{ij}=\left\langle \left[ w_i, w_j\right]_{\mathfrak k}, w_m  \right\rangle, $$
where $\left[ w_i, w_j\right]_{\mathfrak k}$ denotes the projection of $\left[ w_i, w_j\right]$ to $\mathfrak k.$ \\
Take $$\sum_{t=1}^{n}C^t_{ij}C^t_{ml}=C^t_{ij}C^t_{ml},$$ and   
$$\left\langle \left[ w_i, w_j\right]_{\mathfrak k}, Y  \right\rangle=C^0_{ij}$$ for any non-zero $ Y \in \mathfrak k. $\\
Define 
$$ f\left( i, j\right)= \begin{cases}
1, & \text{if}\ \  i<j, \\ 
0, & \text{if}\ \  i \geq j.
\end{cases}  $$
With above notations, let us recall   following lemmas for later use:
\begin{lemma}\cite{SDengZHu2013}  \label{lemma3.1}
	At the origin $  o  =eH,$	we have:
	\begin{enumerate}
		\item  [(i)]  $ 	\Gamma^m_{ij}=f(i,j)C^m_{ij} + \left\langle \nabla_{\hat{w}_i} \hat{w}_j, \hat{w}_m\right\rangle,$
		\item [(ii)] $	\left\langle \nabla_{\hat{w}_i} \hat{w}_j, \hat{w}_m\right\rangle = -\dfrac{1}{2}\left( C^i_{jm}+C^j_{im}+C^m_{ij}\right),$ 
		\item[(iii)] $	\left\langle \nabla_{\hat{w}_i} \hat{w}_j, \hat{w}_m\right\rangle \hat{w}_l = \dfrac{1}{2}\left(C^i_{lt}C^t_{jm}+ C^j_{lt}C^t_{im}+ C^m_{lt}C^t_{ij} +  C^t_{jm}C^t_{li} + C^t_{im}C^t_{lj}+ C^t_{ij}C^t_{lm}\right),$\\
		where $w_i, w_j, w_m, w_l \in \mathfrak k.$
	\end{enumerate}
\end{lemma}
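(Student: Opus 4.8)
\medskip
\noindent\textbf{Proof strategy.}
All three identities are statements about the Levi-Civita connection $\nabla$ of the invariant Riemannian metric $\alpha$ on the reductive homogeneous space $G/H$ of Definition~\ref{defNomizu}, read off at the origin $o=eH$, so the plan is to translate them into purely Lie-algebraic statements about the structure constants $C^m_{ij}$. The device that makes this possible is the adapted chart $(x^1,\dots,x^n)\mapsto\exp(x^1w_1)\exp(x^2w_2)\cdots\exp(x^nw_n)\,H$ near $o$, together with the local frame $\hat w_1,\dots,\hat w_n$ extending the orthonormal basis $w_1,\dots,w_n$ of $\mathfrak k\cong T_o(G/H)$; one computes the connection in the frame $\{\hat w_i\}$ and then passes to the coordinate frame $\partial/\partial x^i$ in which the $\Gamma^m_{ij}$ are defined.

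I would establish (ii) first. Using $G$-invariance of $\alpha$, the computation of $\langle\nabla_{\hat w_i}\hat w_j,\hat w_m\rangle$ at $o$ reduces, via the Koszul formula, to the bracket terms
\[
2\,\langle\nabla_{\hat w_i}\hat w_j,\hat w_m\rangle=\langle[\hat w_i,\hat w_j],\hat w_m\rangle-\langle[\hat w_i,\hat w_m],\hat w_j\rangle-\langle[\hat w_j,\hat w_m],\hat w_i\rangle
\]
evaluated at $o$. In $T_o(G/H)\cong\mathfrak k$ the bracket $[\hat w_i,\hat w_j]$ equals, up to the fixed sign carried by the chosen extension of the $w_i$ to the $\hat w_i$, the $\mathfrak k$-projection $[w_i,w_j]_{\mathfrak k}$, so each inner product above is $\pm C^m_{ij}$; substituting the three bracket terms and using the antisymmetry $C^m_{ij}=-C^m_{ji}$ to symmetrise collects the right-hand side into $-\tfrac12\big(C^i_{jm}+C^j_{im}+C^m_{ij}\big)$, which is (ii).

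For (i), I would expand $\hat w_i$ in the coordinate frame of the adapted chart: $\hat w_i|_o=\partial/\partial x^i|_o$, while the first-order part of $\hat w_i-\partial/\partial x^i$ involves only the brackets $[w_j,w_i]$ produced by the factors $\exp(x^jw_j)$ lying to the \emph{left} of $\exp(x^iw_i)$, i.e.\ those with $j<i$; inserting this into $\nabla_{\partial/\partial x^i}\!\big(\partial/\partial x^j\big)\big|_o$ and again using invariance of $\alpha$ yields exactly the correction $f(i,j)C^m_{ij}$ on top of $\langle\nabla_{\hat w_i}\hat w_j,\hat w_m\rangle$, the asymmetry of $f$ recording which exponential factor lies to the left of which. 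Assertion (iii) is then one further differentiation away: writing $\nabla_{\hat w_j}\hat w_m=\sum_t\langle\nabla_{\hat w_j}\hat w_m,\hat w_t\rangle\hat w_t$ and applying (ii) to each factor expresses the second covariant derivative at $o$ as a quadratic form in the $C$'s, which the Jacobi identity for $\mathfrak k$ (valid modulo its $\mathfrak h$-part), $\sum_t\big(C^t_{ij}C^s_{tm}+C^t_{jm}C^s_{ti}+C^t_{mi}C^s_{tj}\big)=0$, together with antisymmetry of the lower indices, reduces to the six displayed terms.

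The Koszul step for (ii) is routine. The step I expect to be the real work is (i): one has to push the second-order analysis of the exponential-product chart far enough to see that no symmetric-in-$(i,j)$ remainder survives, so that the correction comes out as the asymmetric $f(i,j)C^m_{ij}$ rather than a symmetrised combination; matching the ``which factor sits to the left'' bookkeeping to the definition of $f$, and thereby fixing the single sign convention (the extension of the $w_i$ to $\hat w_i$ and the identification $\mathfrak k\cong T_o(G/H)$) that propagates through (ii) into (iii), is the main obstacle.
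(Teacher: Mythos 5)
The paper does not actually prove Lemma \ref{lemma3.1}; it quotes it from \cite{SDengZHu2013}, so your sketch can only be judged against the standard argument of Deng--Hu, whose overall shape (Lie-algebraic reduction via Koszul, product-of-exponentials chart for the coordinate correction in (i)) you have correctly identified. The concrete problem is in your treatment of (ii), which is the computation everything else rests on. $G$-invariance of $\alpha$ does \emph{not} kill the derivative terms $\hat w_i\langle\hat w_j,\hat w_m\rangle$ in the Koszul formula: the fundamental fields $\hat w_i$ are Killing fields, not fields with constant pairwise inner products, so these terms must be rewritten with the Killing identity $\hat w_i\langle\hat w_j,\hat w_m\rangle=\langle[\hat w_i,\hat w_j],\hat w_m\rangle+\langle\hat w_j,[\hat w_i,\hat w_m]\rangle$. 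Doing so turns Koszul into $2\langle\nabla_XY,Z\rangle=\langle[X,Y],Z\rangle+\langle[X,Z],Y\rangle+\langle[Y,Z],X\rangle$ (all plus signs), and then the anti-homomorphism $[\hat w_a,\hat w_b]_o=-[w_a,w_b]_{\mathfrak k}$ delivers $-\tfrac12\left(C^i_{jm}+C^j_{im}+C^m_{ij}\right)$. Your displayed reduction $\langle[X,Y],Z\rangle-\langle[X,Z],Y\rangle-\langle[Y,Z],X\rangle$ is the formula valid when the inner products are constant; evaluated at $o$ it gives $\tfrac12\left(C^i_{jm}+C^j_{im}-C^m_{ij}\right)$, which agrees with (ii) only when $C^i_{jm}+C^j_{im}=0$, i.e.\ in the naturally reductive case. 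Saying that each bracket term is ``$\pm C^m_{ij}$'' and that the signs ``collect'' into the stated answer conceals exactly the step where the two extra sign flips must be produced; as written, the argument proves the wrong formula.

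There is a second, structural gap in (iii). The left-hand side is the derivative at $o$ of the \emph{function} $p\mapsto\langle\nabla_{\hat w_i}\hat w_j,\hat w_m\rangle(p)$ along $\hat w_l$, and (ii) only gives the value of that function at the single point $o$; expanding $\nabla_{\hat w_j}\hat w_m$ in the frame and ``applying (ii) to each factor'' therefore cannot compute the derivative. The missing ingredient is that Killing fields preserve the Levi-Civita connection, $\mathcal L_{\hat w_l}(\nabla_XY)=\nabla_{[\hat w_l,X]}Y+\nabla_X[\hat w_l,Y]$, which together with the metric-derivation property converts $\langle\nabla_{\hat w_i}\hat w_j,\hat w_m\rangle\hat w_l$ into $\langle\nabla_{[\hat w_l,\hat w_i]}\hat w_j,\hat w_m\rangle+\langle\nabla_{\hat w_i}[\hat w_l,\hat w_j],\hat w_m\rangle+\langle\nabla_{\hat w_i}\hat w_j,[\hat w_l,\hat w_m]\rangle$, each term then being evaluated at $o$ by (ii) and the bracket relations; the six summands of (iii) fall out of this expansion directly. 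Relatedly, the ``Jacobi identity'' you invoke for the projected bracket $[\cdot,\cdot]_{\mathfrak k}$ is generally false (the error terms involve the $\mathfrak h$-components of the brackets), so it is not available to finish the computation. Your outline of (i) via the chart $\exp(x^1w_1)\cdots\exp(x^nw_n)H$ is consistent with the standard proof and is fine as a sketch.
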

\begin{lemma}\label{lemma3.2}\cite{SDengZHu2013}  At the origin $ o=eH,$ we have: 
	\begin{align*}
	b_k&=c\delta_{nk},\\
	s_{jk}&= \dfrac{c}{2}C^n_{jk},\\
	s_k&= \dfrac{c^2}{2}C^n_{nk},\\
	r_{jk}&=\dfrac{c}{2}\left(C^k_{jn} + C^j_{kn}\right),\\
	s_{jk;m}&=\dfrac{c}{2}\left\{- C^t_{jk}C^n_{mt} + \dfrac{1}{2}C^n_{jt}\left( -C^t_{km}+C^m_{kt}+ C^k_{mt}\right) + \dfrac{1}{2}C^n_{tk}\left(-C^t_{jm}+C^m_{jt}+ C^j_{mt} \right) \right\},\\	s_{j;k}&=c\left(s_{nj;k}+\dfrac{c}{2}C^n_{tj}\Gamma^t_{nk}\right),\\
	b_{j;k;m}&=c\Big( - \Gamma^j_{nt}\left\langle \nabla_{\hat{w}_m} \hat{w}_k, \hat{w}_t \right\rangle - \Gamma^t_{nk} \left\langle \nabla_{\hat{w}_m} \hat{w}_j, \hat{w}_t \right\rangle  + C^t_{mn} \left\langle \nabla_{\hat{w}_t} \hat{w}_k, \hat{w}_j \right\rangle\\&\ \  +  \hat{w}_m \left\langle \nabla_{\hat{w}_n} \hat{w}_k, \hat{w}_j \right\rangle \Big),\\
	r_{jk;m}&=s_{jk;m}+b_{k;j;m}.
	\end{align*}
\end{lemma}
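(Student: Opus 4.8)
The lemma is due to \cite{SDengZHu2013}; I sketch the route one would follow to recover it. The plan is to carry out every computation at the origin $o=eH$, where $T_o(G/H)$ is identified with $\mathfrak k$ together with the orthonormal basis $\{w_1,\dots,w_n\}$, $w_n=w/c$, and $(a_{ij})(o)$ is the identity matrix, and then to substitute the definitions of $b_i$, $r_{ij}$, $s_{ij},\dots$ from Section~2 into the Christoffel-symbol expressions of Lemma~\ref{lemma3.1}, simplifying with the antisymmetry $C^m_{ij}=-C^m_{ji}$ of the structure constants and the $\mathrm{Ad}(H)$-invariance of $w_n$ (which is what makes $\beta$ a well-defined invariant $1$-form).

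The first line is immediate: since $\beta(Y)=\langle w,Y\rangle$ with $w=c\,w_n$, orthonormality gives $b_k=\beta(w_k)=c\,\delta_{nk}$, and raising an index, $b^\ell=a^{\ell m}b_m=c\,\delta_{n\ell}$ at $o$. For the first covariant derivative of $\beta$ I would use that $\alpha$ is $\nabla$-parallel, so $(\nabla_X\beta)(Y)=\langle\nabla_X w,Y\rangle$, whence $b_{i;j}$ is read off from $\nabla w$ at $o$ through the connection data of the invariant metric supplied by Lemma~\ref{lemma3.1}; antisymmetrizing the resulting expression in the two lower indices gives $s_{jk}=\frac{c}{2}C^n_{jk}$ and symmetrizing gives $r_{jk}=\frac{c}{2}\bigl(C^k_{jn}+C^j_{kn}\bigr)$. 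The remaining first-order quantities then come by contracting with $b^\ell=c\,\delta_{n\ell}$; for example $s_k=b^\ell s_{\ell k}=c\,s_{nk}=\frac{c^2}{2}C^n_{nk}$.

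For the second-order quantities I would expand $b_{j;k;m}=\partial_m b_{j;k}-\Gamma^t_{mj}b_{t;k}-\Gamma^t_{mk}b_{j;t}$, insert the first-order formula for $b_{i;j}$, evaluate the frame-derivative $\partial_m b_{j;k}$ of the connection-coefficient function using Lemma~\ref{lemma3.1}(iii), and reduce the two Christoffel corrections with Lemma~\ref{lemma3.1}(i)--(ii); collecting terms produces the displayed expression for $b_{j;k;m}$. Then $s_{jk;m}=\frac{1}{2}\bigl(b_{j;k;m}-b_{k;j;m}\bigr)$ yields the stated $s_{jk;m}$, and $r_{jk;m}=s_{jk;m}+b_{k;j;m}$ follows for free by covariantly differentiating the algebraic identity $r_{jk}=s_{jk}+b_{k;j}$ in the direction $w_m$. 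Finally $s_{j;k}$ comes from the Leibniz rule for $s_j=b^\ell s_{\ell j}$, namely $s_{j;k}=(b^\ell)_{;k}\,s_{\ell j}+b^\ell\,s_{\ell j;k}$: at $o$ the second summand is $c\,s_{nj;k}$, and the first summand, rewritten via $(b^\ell)_{;k}=a^{\ell m}b_{m;k}$ (using $\nabla\alpha=0$) together with Lemma~\ref{lemma3.1}, collapses to $\frac{c^2}{2}C^n_{tj}\Gamma^t_{nk}$.

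The main obstacle is organizational rather than conceptual: the second-order formulas demand combining the frame-derivative of a connection coefficient (Lemma~\ref{lemma3.1}(iii)) with two Christoffel-symbol corrections, and one must be scrupulous about which pairs of indices are antisymmetric and about the fact that, although $(a^{ij})(o)$ is the identity, its derivatives $\partial_m a^{ij}$ do not vanish at $o$, so that raising an index and differentiating do not commute — the correct order being dictated by $\nabla\alpha=0$. Once this bookkeeping is set up consistently, each line of Lemma~\ref{lemma3.2} reduces to a routine substitution.
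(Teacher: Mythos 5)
The paper itself offers no proof of this lemma: it is quoted verbatim from Deng and Hu \cite{SDengZHu2013}, so there is no in-paper argument to compare yours against and your sketch must stand on its own. Its overall plan --- evaluate everything at $o$ in the frame $\{\hat w_i\}$, feed the definitions of $r_{ij},s_{ij},\dots$ through Lemma \ref{lemma3.1}, and contract with $b^{\ell}=c\,\delta_{n\ell}$ --- is the right one, and it does correctly account for $b_k=c\delta_{nk}$, for the reductions $s_k=c\,s_{nk}$ and $s_{j;k}=(b^{\ell})_{;k}s_{\ell j}+b^{\ell}s_{\ell j;k}$, and for the identities $s_{jk;m}=\tfrac12(b_{j;k;m}-b_{k;j;m})$ and $r_{jk;m}=s_{jk;m}+b_{k;j;m}$.

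The gap is in your derivation of the first covariant derivative $b_{i;j}$, on which everything else is built. You propose to write $(\nabla_X\beta)(Y)=\langle\nabla_X w,Y\rangle$ and then ``read off'' $\nabla w$ from the connection data of Lemma \ref{lemma3.1}. But Lemma \ref{lemma3.1} computes $\langle\nabla_{\hat w_i}\hat w_j,\hat w_m\rangle$ for the fundamental (Killing) fields $\hat w_i$ of the $G$-action, whereas the field $w$ dual to $\beta$ is the \emph{invariant} field; the two agree only at $o$, so $\nabla_X w\neq c\,\nabla_X\hat w_n$. If you nevertheless substitute $w=c\,\hat w_n$, then, $\hat w_n$ being a Killing field of $\alpha$, the symmetric part $\langle\nabla_X\hat w_n,Y\rangle+\langle\nabla_Y\hat w_n,X\rangle$ vanishes identically and you are forced to conclude $r_{jk}=0$, contradicting the stated $r_{jk}=\tfrac{c}{2}\bigl(C^k_{jn}+C^j_{kn}\bigr)$. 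The missing ingredient is the $G$-invariance of $\beta$ used in the form $L_{\hat w_k}\beta=0$, which gives $\hat w_k\bigl(\beta(\hat w_j)\bigr)=\beta\bigl([\hat w_k,\hat w_j]\bigr)=-c\,C^n_{kj}$ at $o$; combining this with Lemma \ref{lemma3.1}(ii) for the term $\beta(\nabla_{\hat w_k}\hat w_j)$ yields $b_{j;k}=\tfrac{c}{2}\bigl(C^k_{jn}+C^j_{kn}+C^n_{jk}\bigr)$, whose symmetric and antisymmetric parts are exactly the stated $r_{jk}$ and $s_{jk}$. Without this step the second-order formulas cannot be recovered either, since they all start from $b_{i;j}$.
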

Next, we calculate the quantities used in Theorem \ref{riccicur} at the origin for a homogeneous Finsler space using Lemma \ref{lemma3.1} and \ref{lemma3.2}.
\begin{align*}
r_{00}&= cC^0_{0n},\\
s_j&=b^{l}s_{{l}j} =cs_{nj},\\
s_0&=cs_{n0} =\dfrac{c^2}{2}C^n_{n0},\\
r_j&=b^{l}r_{{l}j} =cr_{nj},\\
r_0&=cr_{n0}=-\dfrac{c^2}{2}\left(C^n_{n0}+C^0_{nn}\right)
=\dfrac{c^2}{2}C^n_{0n},\\
r_{00;0}&=cC^0_{0i}\left( C^0_{in}+C^i_{0n}\right),\\
r&=r_{ij}b^ib^j
=c^2r_{nn}
= 0,\\
r^{l}_{l}&=a^{{l}i}r_{i{l}}=-\dfrac{c}{2}\delta^{{l}i}\left( C^i_{n{l}}+C^{l}_{ni}\right) 
= -\dfrac{c}{2}\left( C^{l}_{n{l}}+C^{l}_{n{l}}\right) 
=cC^{l}_{{l}n},\\
r^{l}_0&=a^{l i}r_{i 0}=\delta^{l i}r_{i 0}=r_{{l}0},
\\
r_{0{l}}r^{l}_0&=r_{0{l}}r_{{l}0}= \left( -\dfrac{c}{2}\right) ^2\left(C^0_{n{l}}+C^{l}_{n0} \right)  \left(C^{l}_{n0}+C^0_{n{l}} \right) =\dfrac{c^2}{4}\left( C^0_{n{l}}+C^{l}_{n0}\right)^2,\\
r_{00;{l}}b^{l}&=r_{00;n}b^n=cr_{00;n}
=c\left( s_{00;n}+b_{0;0;n}\right) 
=\dfrac{c^2}{2}\left( C^t_{0n}+C^0_{tn}\right)\left(C^n_{0t}+C^0_{nt}+ C^t_{0n}\right),
\end{align*}
\begin{align*}
r_{0{l};0}b^{l}&=r_{0n;0}b^n=cr_{0n;0}
=c\left(s_{0n;0}+b_{n;0;0}\right)\\& 
=\dfrac{c^2}{4}\left\{2\ C^n_{nt}C^0_{t0}+ \left(C^n_{t0}+C^0_{tn}+C^t_{0n}\right)\left(C^0_{nt}+C^t_{n0}\right)\right\}, 
\\
s^{l}_0&=a^{l i}s_{i 0}=\delta^{l i}s_{i 0}=s_{{l}0},
\\
r_{0{l}}s^{l}_0&=r_{0{l}}s_{{l}0}
=\dfrac{c^2}{4} \left( C^0_{n{l}}C^n_{0{l}}+C^{l}_{n0}C^n_{0{l}}\right),\\
s_{0;0}&= c s_{n0;0}+\dfrac{c^2}{2}C^n_{{l}0}\Gamma^{l}_{n0} = \dfrac{c^2}{2} C^n_{nt} C^0_{0t},\\
s_{0{l}}s^{l}_0&=s_{0{l}}s_{{l}0}
=-\dfrac{c^2}{4}\left( C^n_{0{l}}\right)^2,\\
s_{l}s^{l}_0&=s_{l}s_{{l}0}
=\dfrac{c^3}{4}C^n_{n{l}}C^n_{{l}0},\\
s_{l}r^{l}_0&
=s_{l}r_{l0}
=-\dfrac{c^3}{4}C^n_{n{l}} \left(C^0_{n{l}}+C^{l}_{n0}\right),
\\
r_{l}s^{l}_0&=r_{{l}}s_{{l}0}
= \dfrac{c^3}{4}C^n_{{l}n}C^n_{{l}0} ,
\\
s_{0;{l}}b^{l}&
=cs_{0;n}
=c\left( c s_{n0;n}+ \dfrac{c^2}{2}C^n_{t0}\Gamma^t_{nn}\right)
=\dfrac{c^3}{4}C^n_{nt}\left(C^t_{0n}+C^0_{nt}+C^n_{0t} \right),
\\
s_{{l};0}b^{l}&
=cs_{n;0}
=c\left( c s_{nn;0}+ \dfrac{c^2}{2}C^n_{tn}\Gamma^t_{n0}\right)
=-\dfrac{c^3}{4}C^n_{tn}\left(C^n_{0t}+C^0_{nt}+ C^t_{n0}\right),
\\
s^{l}_{0;{l}}&
=a^{l i}s_{i0;{l}}
=\delta^{l i}s_{i0;{l}}
=s_{{l}0;{l}}
=\dfrac{c}{4}\left\{2 C^n_{t0}C^{l}_{{l}t}+C^n_{{l}t}\left(C^t_{0{l}}+C^{l}_{0t}+C^0_{{l}t} \right) \right\},\\
s_{l}s^{l}&
=s_{l} s_{l}
=\dfrac{c^4}{4}\left(C^n_{n{l}} \right)^2,\\
s^j_{l} s^{l}_j&
=-\dfrac{c^2}{4}\left( C^n_{j{l}}\right)^2.
\end{align*}
Now, we calculate $\zeta_1$ to $\zeta_{26}$ for square  metric as follows:
\begin{align*}
\zeta_1=&{-\dfrac {3\Big(10\,{s}^{4}-8\,{s}^{3}- \left( 4\,B+3 \right) {s}^{2}+
		\left( 4+4\,B \right) s-1-2\,B\Big)} {\left( 1-3\,{s}^{2}+2\,B \right)^{3}}},\\
\zeta_2=&	{-\dfrac {6\Big(15\,{s}^{6}+ \left( -25\,B-2 \right) {s}^{4}+ \left( 4\,B+
		8\,{B}^{2}-3 \right) {s}^{2}+2\,{B}^{2}+B\Big)}{ \left( 1-3\,{s}^{2}+2\,B
		\right) ^{4}}},\\
\zeta_3=&  {-\dfrac {8\Big(-24\,{s}^{4}+15\,{s}^{3}+ \left( 10+8\,B \right) {s}^{2}+
		\left( -9-6\,B \right) s+4\,B+2\Big)}{ \left( 1-s \right)  \left( 1-3\,{s
		}^{2}+2\,B \right) ^{3}}},  \\
\zeta_4=&    \dfrac{\splitdfrac   {-48\ \left(B- {s}^{2}\right)^2   \bigl( -63\,{s}^{6}+  117{s}^{5}+ \left(36\,B -36 \right) {s}^{4}- \left(30+60\,B\right) {s}^{3}}  {+ \left(4\,{B}^{2}+ 28\,B+25 \right) {s}^{2}- \left(12\,B+3+12\,{B}^{2} \right) s+2+8\,B+8\,{B}^{2} \bigr) }   }  { \left(1-s\right)^{3} \left(1-3\,{s}^{2}+2\,B\right)^{5}}\\&+
{\dfrac { 8\Big( 111\,{s}^{4}-168\,{s}^{3}+ \left( 58-22\,B\right) {s}^{2}-12\,Bs+7+18\,B+8\,{B}^{2} \Big)  \left( B-{s}^{2}\right) }{ \left( 1-3\,{s}^{2}+2\,B \right) ^{3} \left( 1-s \right) ^{3}}}\\&-
{\dfrac {4\Big(39\,{s}^{4}-51\,{s}^{3}+ \left( 7-16\,B \right) {s}^{2}+
		\left( 9+6\,B \right) s+2\,B+4\,{B}^{2}\Big)}{ \left( 1-3\,{s}^{2}+2\,B\right) ^{2} \left( 1-s \right) ^{3}}},
\end{align*}
\begin{align*}
\zeta_5=& {\dfrac {8\Big(1-2\,s\Big)}{ \left( 1-3\,{s}^{2}+2\,B \right) ^{2}}}, \\
\zeta_6=& {\dfrac {12\ s \Big( 4\,B-3\,{s}^{2}-1 \Big) }{ \left( 1-3\,{s}^{2}+
		2\,B \right) ^{3}}},\\
\zeta_7=& {\dfrac {-1+2\,s}{ 1-3\,{s}^{2}+2\,B}}, \\
\zeta_8=& {\dfrac {-6\,s\Big(B-s^2\Big)}{ \left( 1-3\,{s}^{2}+2\,B \right) ^{2}}},\\
\zeta_9=& \dfrac{\splitdfrac  {-16\bigl(-27\,{s}^{6}+ \left( 54\,B+42 \right) {s}^{4}- \left( 20\,B+7 \right) {s}^{3}-\left( 28\,B+20+36\,{B}^{2} \right) {s}^{2}}  {+\left( 9+10\,B+8\,{B}^{2} \right) s-1+8\,{B}^{3}+8\,{B}^{2}\bigr)}  } { \left(1-s \right) ^{3} \left( 1-3\,{s}^{2}+2\,B \right) ^{3}}, \\
\zeta_{10}=& \dfrac {\splitdfrac{48\bigl( 45\,{s}^{4}-60\,{s}^{3}+ \left( 18-12\,B \right) {s}^{2}+ \left( -4-8\,B \right) s} {+5+4\,{B}^{2}+12\,B \bigr) \left( {B}^{2}-2\,{s}^{2}B+{s}^{4} \right) }}{ \left( 1-3\,{s}^{2}+2\,B \right)^{4} \left( 1-s \right) ^{4}}\\-&
\dfrac {32 \big( 60\,{s}^{4}-72\,{s}^{3}+ \left( 9-21\,B \right) {s}^{2}+ \left( 4-4\,B \right) s+5+13\,B+6\,{B}^{2} \big)  \left( B-{s}^{2} \right) }{ \left( 1-s \right) ^{4} \left( 1-3\,{s}^{2}+2\,B\right) ^{3}}\\&+
\dfrac {16\Big(-17\,{s}^{3}+2\,{s}^{2}+ \left( 4\,B+8 \right) s+1+2\,B\Big)} {\left( 1-s \right) ^{3} \left( 1-3\,{s}^{2}+2\,B \right) ^{2}}\\&+
\dfrac {4\Big(55\,{s}^{4}-60\,{s}^{3}+ \left( 10-52\,B \right) {s}^{2}+
	\left( 32\,B+12 \right) s-5-4\,B+12\,{B}^{2}\Big)}{ \left( 1-s \right) ^{4} \left( 1-3\,{s}^{2}+2\,B \right) ^{2}}, \\
\zeta_{11}=& \dfrac {-4}{ \left( 1-3\,{s}^{2}+2\,B \right) ^{2}}, \\
\zeta_{12}=& \dfrac {-32\,\Big(1-2\,s\Big)}{ \left( 1-s\right) \left( 1-3\,{s}^{2}+2\,B \right) ^{2}},\\
\zeta_{13}=& \dfrac {8\Big(21\,{s}^{3}-3\,{s}^{2}- \left( 26\,B-5 \right) s+2\,B+1\Big)}{\left( 1-s \right)  \left( 1-3\,{s}^{2}+2\,B \right) ^{3}},
\hspace{5.5cm} \\
\zeta_{14}=&   \dfrac {2}{  1-3\,{s}^{2}+2\,B }, \\
\zeta_{15}=&  \dfrac {8\,\left( 1-2\,s\right) }{ \left( 1-s\right)  \left( 1-3\,{s}^{2}+2\,B \right)},\\
\zeta_{16}=& \dfrac {2\Big(21\,{s}^{4}-18\,{s}^{3}+ \left( -2-22\,B \right) {s}^{2}+ \left( -2+20\,B \right) s+1+2\,B\Big)}{ \left( 1-s \right)^2  \left( 1-3\,{s}^{2}+2\,B \right) ^{2}}, \\
\zeta_{17}=&   \dfrac {4\,\left( 1-2\,s\right) }{ \left( 1-s\right)  \left( 1-3\,{s}^{2}+2\,B \right)}, \\ 
\zeta_{18}=
&\dfrac {2\, \Big(15\,{s}^{4}-18\,{s}^{3}+ \left( 2-14\,B \right) {s}^{2}+ \left( 2+16\,B \right) s-2\,B-1\Big)}{\left( 1-s \right)^2  \left( 1-3\,{s}^{2}+2\,B \right) ^{2}}, \\
\zeta_{19}=& \dfrac {-4\,\Big(1-3\,s\Big)}{ \left( 1-s \right) ^{3}}, 
\end{align*}
\begin{equation}\label{smzeta}
\begin{split}
\zeta_{20}=& \dfrac {16}{\left( 1-s\right) \left( 1-3\,{s}^{2}+2\,B \right) ^{2}  },\\
\zeta_{21}=& \dfrac {-16\Big(1-2\,s\Big)}{\left( 1-s\right) ^{2} \left( 1-3\,{s}^{2}+2\,B \right)},  \\
\zeta_{22}=&  \dfrac {4\, \Big(27\,{s}^{3}-3\,{s}^{2}- \left( 26\,B+1 \right) s+1+2\,B\Big)}{\left( 1-s \right) ^{2} \left( 1-3\,{s}^{2}+2\,B \right) ^{2}},  \\
\zeta_{23}=&  \dfrac {4}{ \left( 1-s \right)  \left( 1-3\,{s}^{2}+2\,B \right) }, \\
\zeta_{24}=& \dfrac{4}{1-s}, \\
\zeta_{25}=&  \dfrac {-16}{ \left( 1-s \right)^2  \left( 1-3\,{s}^{2}+2\,B \right) },  \\
\zeta_{26}=&  \dfrac {-4}{ \left( 1-s \right)^2 }.
\end{split}
\end{equation}
From above calculations, we have the following:
\begin{theorem}{\label{RCSM}}
	Let $G/H$ be a homogeneous Finsler space with square metric $F.$
	Then, 	the Ricci curvature  is given by
	\begin{align*}
	Ric(Z)=&^\alpha Ric(Z)+ \dfrac{c^2\left( C^0_{0n}\right)^2 }{\alpha^2(Z)}\Big\{ \left( n-1\right)\zeta_1+\zeta_2  \Big\}
	 +\dfrac{c^3C^0_{0n}C^n_{n0}}{2\alpha(Z)}\Big\{ \left( n-1\right) \left(\zeta_3-\zeta_5 \right) +\zeta_4-\zeta_6 \Big\}\\
	& -\dfrac{cC^0_{0l}\left( C^0_{nl}+C^l_{n0}\right) }{\alpha(Z)}\Big\{ \left( n-1\right)\zeta_7+\zeta_8  \Big\} 
	  + \dfrac{c^4\left( C^n_{n0}\right)^2 }{4}\Big\{ \left( n-1\right)\left( \zeta_9-\zeta_{12} \right) +\zeta_{10}-\zeta_{11}-\zeta_{13} \Big\}\\
	&  - \dfrac{c^2}{4}\Big\{ 4C^0_{0n}C^l_{nl }+\left(C^l_{n0}+C^0_{nl} \right) \left( 2C^0_{nl}+C^n_{0l}+2C^l_{0n}\right) + 2 C^n_{nl}C^0_{l0}\Big\} \zeta_{14} \\
	& + \dfrac{c^2}{4}C^n_{0l}\left( C^l_{n0}+C^0_{nl}\right)\Big\{ \left( n-1\right)\zeta_{15}+\zeta_{16}\Big\} 
	 + \dfrac{c^2}{2}C^n_{nl}C^0_{0l} \Big\{ \left( n-1\right)\zeta_{17}+\zeta_{18}\Big\} \\
	&  - \dfrac{c^2}{4}\left( C^n_{l0}\right)^2\zeta_{19} + \dfrac{c^3}{4}\alpha(Z)C^n_{l0}C^n_{nl}\Big\{ \left( n-1\right)\zeta_{21}+\zeta_{22}\Big\} \\
	&  + \dfrac{c^3}{4}\alpha(Z)\Big\{  4C^n_{n0}C^l_{nl}-C^n_{nl}\left( 4C^0_{nl}-C^n_{0l}\right)   \Big\} \zeta_{23} 
	   +\dfrac{c}{4}\alpha(Z)\Big\{ 2C^n_{t 0}C^l_{l t}+C^n_{l t}C^0_{l t}\Big\} \zeta_{24} \\&
	+ \dfrac{c^2}{4}\alpha^2(Z) \Big\{c^2 \left( C^n_{nl}\right)^2 \zeta_{25}-\left( C^n_{l i}\right)^2 \zeta_{26}  \Big\}, 
	\end{align*}
	where $Z$ is a non-zero vector in $   \mathfrak{k},$ and  $\zeta_1$ to $\zeta_{26}$  are given by (\ref{smzeta}).
\end{theorem}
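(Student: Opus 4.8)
The plan is to specialize the general formula for $RT^l_l$ from Theorem \ref{riccicur} to the homogeneous setting by substituting the values of all the invariant quantities $r_{00}, s_0, r_0, r_{00;0}, \dots, s^i_l s^l_i$ computed at the origin $o = eH$ (the list displayed just before the statement, which itself follows from Lemmas \ref{lemma3.1} and \ref{lemma3.2}), together with the explicit expressions for $\zeta_1, \dots, \zeta_{26}$ for the square metric $\phi(s) = 1 + s^2 + 2s$ recorded in \eqref{smzeta}. Since $Ric = {}^\alpha Ric + RT^l_l$ and everything is evaluated on a nonzero $Z \in \mathfrak{k}$ (identified with $T_o(G/H)$), with $\alpha(Z)$ playing the role of $\alpha$, this is in principle a direct substitution.

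First I would organize the terms of $RT^l_l$ by their homogeneity in $\alpha$: the $\alpha^{-2}$ term, the $\alpha^{-1}$ terms, the $\alpha^0$ terms, the $\alpha^1$ terms, and the $\alpha^2$ terms, matching the grouping already present in Theorem \ref{riccicur}. Within each group I would plug in the origin values: e.g. $r_{00}^2 = c^2 (C^0_{0n})^2$ gives the $\alpha^{-2}$ coefficient $c^2(C^0_{0n})^2\{(n-1)\zeta_1 + \zeta_2\}$; the combination $r_{00}s_0$ and $r_{00}r_0$ both produce $\tfrac{c^3}{2} C^0_{0n} C^n_{n0}$ up to sign (using $r_0 = \tfrac{c^2}{2} C^n_{0n}$ and $s_0 = \tfrac{c^2}{2} C^n_{n0}$, noting $C^0_{0n}$ pairs against these), which is why the $\zeta_3, \zeta_5$ and $\zeta_4, \zeta_6$ coefficients get combined as $(n-1)(\zeta_3 - \zeta_5) + (\zeta_4 - \zeta_6)$; the term $rr_{00} - r_0^2$ collapses because $r = c^2 r_{nn} = 0$, so only $-r_0^2 \zeta_{11} = -\tfrac{c^4}{4}(C^n_{n0})^2 \zeta_{11}$ survives and merges with the $s_0^2$ and $r_0 s_0$ contributions; and so on through $r_{0l}s^l_0$, $s_{0;0}$, $s_{0l}s^l_0$, the $\alpha$-linear block, and finally $s_l s^l \zeta_{25} + s^i_l s^l_i \zeta_{26}$ giving the $\alpha^2$ coefficient $\tfrac{c^2}{4}\alpha^2(Z)\{c^2(C^n_{nl})^2 \zeta_{25} - (C^n_{li})^2 \zeta_{26}\}$ (the sign on $\zeta_{26}$ coming from $s^i_l s^l_i = -\tfrac{c^2}{4}(C^n_{jl})^2$). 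One must also handle the composite bracket $r_{00}r^l_l - r_{0l}r^l_0 + r_{00;l}b^l - r_{0l;0}b^l$ multiplying $\zeta_{14}$ and the analogous bracket multiplying $\zeta_{23}$, and the divergence-type terms $s_{0;l}b^l, s_{l;0}b^l, s^l_{0;l}$ in the $\zeta_{23}, \zeta_{24}$ block, using the second-covariant-derivative formulas from Lemma \ref{lemma3.2}; these are the entries that demand the most care.

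The main obstacle is purely bookkeeping: correctly tracking signs and the raising/lowering of indices when passing from the tensorial quantities to their structure-constant expressions — in particular the identity $C^0_{nn} = -C^n_{0n} - C^n_{n0}$ used implicitly to rewrite $r_0$, the skew-symmetry of $C^m_{ij}$ in $i,j$, and the reindexing in the second-derivative terms $r_{0l;0}b^l$ and $s^l_{0;l}$. I expect no conceptual difficulty: each coefficient in the claimed formula is matched term-by-term against the corresponding line of $RT^l_l$ after substitution, and the combined coefficients like $(n-1)(\zeta_3 - \zeta_5) + \zeta_4 - \zeta_6$ record exactly which pairs of origin-quantities coincide up to sign. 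Therefore the statement follows by collecting terms, and I would present it as the outcome of this substitution rather than reproduce every intermediate algebraic simplification.
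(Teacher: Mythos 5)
Your proposal is correct and follows exactly the route the paper takes: the paper's ``proof'' of Theorem \ref{RCSM} consists precisely of substituting the origin values of $r_{00}, s_0, r_0, r_{00;0},\dots, s^i_l s^l_i$ (computed from Lemmas \ref{lemma3.1} and \ref{lemma3.2}) and the square-metric expressions \eqref{smzeta} for $\zeta_1,\dots,\zeta_{26}$ into Theorem \ref{riccicur}, then collecting terms as you describe (including the sign-driven combinations $(n-1)(\zeta_3-\zeta_5)+\zeta_4-\zeta_6$ from $r_0=-s_0$ and the vanishing of $r$). No further comment is needed.
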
 
Next,  $\zeta_1$ to $\zeta_{26}$ for   Randers change of  square metric are calculated as follows:
\begin{align*}
\zeta_1
=& \dfrac {\splitdfrac{ 3\,\bigl(-40\,{s}^{8}-192\,{s}^{7}+ \left( 16\,B-235 \right) {s}^{6} +\left( 72\,B+96 \right) {s}^{5}+ \left( 235+90\,B \right) {s}^{4}}{-\left( 73+20\,B \right){s}^{2} +24\,sB+18\,B+9\bigr)}}{4\, \left( 1-3\,{s}^{2}+2\,B \right) ^{3} \left( 1+3\,s+{s}^{2} \right) ^{2}},\\
\zeta_2=& \dfrac {-6\,\Big(9\,{s}^{6}+ \left( -13\,B-2 \right) {s}^{4}+ \left( 2\,{B}^{2}+4\,B-3 \right) {s}^{2}+B+2\,{B}^{2}\Big)} { \left(1-3\,{s}^{2}+2\,B\right) ^{4}}, 
\end{align*}
\begin{align*}
\zeta_3=&  \dfrac { \splitdfrac{  4\,  \left( 2\,s+3 \right)  \bigl( 12\,{s}^{6}+54\,{s}^
		{5}+ \left( 73+8\,B \right) {s}^{4}+ \left( 48\,B-12 \right) {s}^{3}   }{+
		\left( -42+78\,B \right) {s}^{2}+ \left( 6+48\,B \right) s+18\,B+9
		\bigr) \left( B-{s}^{2} \right)} }{ \left( 1-{s}^{2} \right)  \left( 1+3\,s+{s}^{2} \right) ^{
		2} \left( 1-3\,{s}^{2}+2\,B \right) ^{3}}\\
&-{\dfrac { 4\,(3-9\,{s}^{2}-4\,{s}
		^{3})\left( B-{s}^{2} \right)}{ \left( 1-{s}^{2} \right) ^{2} \left( 1-3\,{s}^{2}+2\,B \right) 
		^{2}}} 
-	{\dfrac {2\, \left( 2\,s+3 \right)  \left( -3+9\,{s}^{2}+4\,{s}^{3}
		\right) s}{ \left( 1-{s}^{2} \right)  \left( 1+3\,s+{s}^{2} \right) 
		\left( 1-3\,{s}^{2}+2\,B \right) ^{2}}}\\
&
-
\dfrac {  \splitdfrac{   \left( 2\,s+3 \right)  \bigl( 40\,{s}^{6}+180\,{s}^{5}+
		\left( 227+16\,B \right) {s}^{4}+ \left( 96\,B-48 \right) {s}^{3}} {-
		\left( 138-156\,B \right) {s}^{2}+ \left( 12+96\,B \right) s+36\,B+
		27 \bigr)} }{ \left( 1+3\,s+{s}^{2} \right) ^{2} \left( 1-3\,{s}^{2}+2
	\,B \right) ^{2} \left( 1-{s}^{2} \right) }\\
&-{\dfrac {2\, \left( -3+9\,{s}^{2}+4\,{s}^{3} \right)  \left( 2-4\,{s}^
		{2}+2\,B+3\,s-9\,{s}^{3}+6\,sB-2\,{s}^{4}+2\,{s}^{2}B \right) }{
		\left( 1-{s}^{2} \right) ^{2} \left( 1+3\,s+{s}^{2} \right)  \left( 
		1-3\,{s}^{2}+2\,B \right) ^{2}}},  \\
\zeta_4=& - 24\, \biggl( {\dfrac { \left( 3+2\,s \right)  \left( 18\,{s}^{2}+2+4\,B+
		s-3\,{s}^{3}+2\,Bs \right) }{ \left( 1-3\,{s}^{2}+2\,B \right) ^{4}
		\left( 1-{s}^{2} \right) }} \\& \ \  +{\dfrac {12\, \left( 6\,s+2\,{s}^{3} +9\,{s}^{2}
		+3 \right) {s}^{2}}{ \left( 1-{s}^{2} \right) ^{3} \left( 1-3\,{s}^{2}+2\,B \right) ^{5}}} \biggr)  \left( B-{s}^{2} \right) ^{2}\\&+
4\, \biggl( {\dfrac {10\,{s}^{3}-3+27\,{s}^{2}+6\,s}{ \left( 1-{s}^{2}
		\right) ^{2} \left( 1-3\,{s}^{2}+2\,B \right) ^{2}}}+{\dfrac {2\,\left( 6\,s+
		2\,{s}^{3}+9\,{s}^{2}+3\right) }{ \left( 1-{s}^{2} \right) ^{3} \left( 1-3\,{s
		}^{2}+2\,B \right) }} \\& \ \ +{\dfrac {6\, \left( 2\,s+3 \right)  \left( 14\,{s
		}^{2}+1+2\,B \right) }{ \left( 1-3\,{s}^{2}+2\,B \right) ^{3} \left( 1
		-{s}^{2} \right) }} +{\dfrac {12\,s}{ \left( 1-3\,{s}^{2}+2\,B \right) ^
		{3}}} \biggr)  \left( B-{s}^{2} \right) \\&
-{\dfrac {2\,\left( 4\,{s}^{3}-3+9\,{s}^{2}\right) }{ \left( 1-3\,{s}^{2}+2\,B\right)  \left( 1-{s}^{2} \right) ^{2}}}
-{\dfrac {12\,s \left( 2+8\,{s}^{2}+15\,s \right) }{ \left( 1-3\,{s}^{2}+2\,B \right) ^{2} \left( 1-{s}^{2} \right) }}\\&
-{\dfrac {2\,\left( 6\,s+2\,{s}^{3}+9\,{s}^{2}+3\right) }{ \left( 
		1-{s}^{2} \right) ^{3}}},\\
\zeta_5=& \dfrac {-4\,\left( -3+9\,{s}^{2}+4\,{s}^{3}\right) }{ \left( 1+3\,s+{s}^{2} \right) 
	\left( 1-3\,{s}^{2}+2\,B \right) ^{2}},  \\
\zeta_6=& {\dfrac {12\,s \left( 4\,B-3\,{s}^{2}-1 \right) }{ \left( 1-3\,{s}^{2}+
		2\,B \right) ^{3}}}, \\
\zeta_7=& {\dfrac {-3+9\,{s}^{2}+4\,{s}^{3}}{2\, \left( 1+3\,s+{s}^{2} \right) 
		\left( 1-3\,{s}^{2}+2\,B \right) }}, \\
\zeta_8=& {\dfrac {-6\,s\Big(B-s^2\Big)}{ \left( 1-3\,{s}^{2}+2\,B \right) ^{2}}},\\
\zeta_9=
& \dfrac { 8\,\left( 3+2\,s \right)  \left( -3+9\,{s}^{2}+4\,{s}
	^{3} \right)\left( B-{s}^{2} \right) }{ \left( 1-{s}^{2} \right) ^{3} \left( 1-3\,{s}^{2}+2\,B
	\right) ^{2}}\\
&-\dfrac {\splitdfrac{4\, \left( 3+2\,s \right) ^{2} \bigl(12\,{s}^{6}+54\,{s}^{5}+ \left( 73+8\,B \right) {s}^{4}+ \left( 48\,B-
		12 \right) {s}^{3}}{+ \left( -42+78\,B \right) {s}^{2}+ \left( 6+48\,B
		\right) s+18\,B+9
		\bigr)\left( B-{s}^{2} \right)} }{ \left( 1-{s}^{2}
	\right) ^{2} \left( 1+3\,s+{s}^{2} \right) ^{2} \left( 1-3\,{s}^{2}+2
	\,B \right) ^{3}}  \\
&+{\dfrac { \left( 3+2\,s \right) ^{2} \left( 3-9\,{s}^{2}-4\,{s}^{3}
		\right) ^{2}}{ \left( 1-{s}^{2} \right) ^{2} \left( 1+3\,s+{s}^{2}
		\right) ^{2} \left( 1-3\,{s}^{2}+2\,B \right) ^{2}}}\\
 &+{\dfrac {  \splitdfrac{  2\,
			\left( 3+2\,s \right) ^{2} \bigl( 12\,{s}^{6}+54\,{s}^{5}+ \left( 73+
			8\,B \right) {s}^{4}+ \left( 48\,B-12 \right) {s}^{3}}{+ \left( -42+78\,
			B \right) {s}^{2}+ \left( 6+48\,B \right) s+18\,B+9 \bigr)} }{ \left( 
		1-{s}^{2} \right) ^{2} \left( 1+3\,s+{s}^{2} \right) ^{2} \left( 1-3\,
		{s}^{2}+2\,B \right) ^{2}}}\\
&-{\dfrac {4\, \left( 3+2\,s \right)  \left( 3-9\,{s}^{2}-4\,{s}^{3}
		\right) }{ \left( 1-{s}^{2} \right)  \left( 1+3\,s+{s}^{2} \right) 
		\left( 1-3\,{s}^{2}+2\,B \right) ^{2}}}
-{\dfrac {8\, \left( 3+2\,s
		\right)  \left( 1+3\,s+{s}^{2} \right) }{ \left( 1-{s}^{2} \right) ^{3}}},
	\end{align*}
	\begin{align*}
\zeta_{10}=
& \Biggl\{ \dfrac {16\,\Big(18\,s+28\,{s}^{2}+18\,{s}^{3}+3\,{s}^{4}+8\Big)}{ \left( 1-{s}^{2} \right) ^{4} \left( 1-3\,{s}^{2}+2\,B \right) ^{2}}
-\dfrac {144\,{s}^{2} \left( 3+2\,s \right) ^{2}}{ \left( 1-3\,{s}^{2}+2\,B \right) ^{4} \left( 1-{s}^{2} \right) ^{2}}\\&
+\dfrac {\splitfrac{ 48\,\left( 3+2\,s \right)  \bigl( -24\,{s}^{5}-45\,{s}^{4}+
		12\,{s}^{3}} {+ \left( 6\,B+30 \right) {s}^{2}+ \left( 4+8\,B \right) s+3
		+6\,B \bigr) }}{ \left( 1-3\,{s}^{2}+2\,B \right) ^{4} \left( 1-{s}^{
		2} \right) ^{3}}
\Biggr\}(B-s^2)^2\\&-
\Biggl\{ \dfrac {\splitfrac{8\, \left( 3+2\,s \right)  \bigl( -108\,{s}^{5}-207\,{s}^{4}+
		\left( 8\,B+52 \right) {s}^{3}} {+ \left( 42\,B+138 \right) {s}^{2}+
		\left( 16+32\,B \right) s+9+18\,B \bigr) }} { \left( 1-{s}^{2}
	\right) ^{3} \left( 1-3\,{s}^{2}+2\,B \right) ^{3}}\\& 
+{\dfrac { 16\,\Big(18\,s+28\,{s}^{2}+18\,{s}^{3}+3\,{s}^{4}+8\Big)}{ \left( 1-{s}^{2} \right) ^{4} \left( 1-3\,{s}^{2}+2\,B \right) }}\\& 
+{\dfrac {16\, \Big(-15\,{s}^{4}-27\,{s}^{3}+ \left( 2\,B+10 \right) {s}^{2}+
		\left( 6\,B+21 \right) s+1+2\,B\Big)}{ \left( 1-3\,{s}^{2}+2\,B \right) ^{3} \left( 1-{s}^{2} \right) ^{2}}} 
\Biggr\}(B-s^2)\\
&+{\dfrac {8\, \left( 2+4\,{s}^{2}+9\,s \right)  \left( -9\,{s}^{4}-18\,
		{s}^{3}+ \left( 2\,B+4 \right) {s}^{2}+ \left( 6\,B+12 \right) s+1+2\,
		B \right) }{ \left( 1-{s}^{2} \right) ^{3} \left( 1-3\,{s}^{2}+2\,B
		\right) ^{2}}}\\&
-{\dfrac {4\, \left( 3+2\,s \right)  \left( -12\,{s}^{3}-15\,{s}^{2}+
		\left( 6+8\,B \right) s+6+12\,B \right) }{ \left( 1-3\,{s}^{2}+2\,B
		\right) ^{2} \left( 1-{s}^{2} \right) ^{2}}}
+\\&
{\dfrac {418\,s+28\,{s}^{2}+18\,{s}^{3}+3\,{s}^{4}+8}{ \left( 1-{s}^
		{2} \right) ^{4}}},\\
\zeta_{11}=& \dfrac {-4}{ \left( 1-3\,{s}^{2}+2\,B \right) ^{2}}, \\
\zeta_{12}=& {\dfrac {8\, \left( 3+2\,s \right)  \left( -3+9\,{s}^{2}+4\,{s}^{3}
		\right) }{ \left( 1+3\,s+{s}^{2} \right)  \left( 1-3\,{s}^{2}+2\,B
		\right) ^{2} \left( 1-{s}^{2} \right) }}, \\
\zeta_{13}=&  {\dfrac {8\,\Big(21\,{s}^{4}+27\,{s}^{3}- \left( 26\,B-2 \right) {s}^{2}-\left( 36\,B-9 \right) s+1+2\,B\Big)}{ \left( 1-3\,{s}^{2}+2\,B \right) ^{3} \left( 1-{s}^{2} \right) }}, \\
\zeta_{14}=&   \dfrac {2}{  1-3\,{s}^{2}+2\,B },  \\
\zeta_{15}=& {\dfrac {2\, \left( 2\,s+3 \right)  \left( 3-9\,{s}^{2}-4\,{s}^{3}
		\right) }{ \left( 1-{s}^{2} \right)  \left( 1+3\,s+{s}^{2} \right) 
		\left( 1-3\,{s}^{2}+2\,B \right) }},\\
\zeta_{16}=& {\dfrac {2\,\Big(-21\,{s}^{4}-36\,{s}^{3}+ \left( 22\,B-4 \right) {s}^{2}+
		36\,sB+2\,B+1\Big)}{ \left( 1-3\,{s}^{2}+2\,B \right) ^{2} \left( 1-{s}^{2} \right) }}, \\
\zeta_{17}=&  {\dfrac { \left( 2\,s+3 \right)  \left( 3-9\,{s}^{2}-4\,{s}^{3}
		\right) }{ \left( 1-{s}^{2} \right)  \left( 1+3\,s+{s}^{2} \right) 
		\left( 1-3\,{s}^{2}+2\,B \right) }},  \\ 
\zeta_{18}=&{\dfrac {-2\,\Big(15\,{s}^{4}+18\,{s}^{3}- \left( 14\,B+4 \right) {s}^{2} -18\,Bs+1+2\,B\Big)}{ \left( 1-{s}^{2} \right)  \left( 1-3\,{s}^{2}+2\,B \right) ^{2}}},\\
\zeta_{19}=& {\dfrac {2\,\Big(-7+27\,{s}^{2}+24\,{s}^{3}+6\,{s}^{4}\Big)}{ \left( 1-{s}^{2}\right) ^{3}}},  \\  
\zeta_{20}=& {\dfrac {8\,\left( 2\,s+3\right) }{ \left( 1-3\,{s}^{2}+2\,B \right) ^{2} \left( 1-{s}^{2} \right) }}, \\
\zeta_{21}=& \dfrac {2\, \left( 2\,s+3 \right) ^{2} \left( -3+9\,{s}^{2}+4\,{s}^{3}
	\right) }{ \left( 1-{s}^{2} \right) ^{2} \left( 1+3\,s+{s}^{2}
	\right)  \left( 1-3\,{s}^{2}+2\,B \right) }, 
\end{align*}

\begin{equation}\label{rcsmzeta}
\begin{split}
\zeta_{22}=&  {\dfrac {-2\, \left( 2\,s+3 \right)  \left( -27\,{s}^{4}-36\,{s}^{3}+
		\left( 26\,B+4 \right) {s}^{2}+36\,Bs-1-2\,B \right) }{ \left( 1-3\,{
			s}^{2}+2\,B \right) ^{2} \left( 1-{s}^{2} \right) ^{2}}},  \hspace*{2cm}\\
\zeta_{23}=& {\dfrac {2\,\left( 2\,s+3\right) }{ \left( 1-{s}^{2} \right)  \left( 1-3\,{s}^{2}+2\,B \right) }},  \\
\zeta_{24}=&  {\dfrac {2\,\left( 2\,s+3\right) }{1-{s}^{2}}}, \\
\zeta_{25}=& {\dfrac {-4\,\left( 9+4\,{s}^{2}+12\,s\right) }{ \left( 1-{s}^{2} \right) ^{2} \left( 1-3\,{s}^{2}+2\,B \right) }},  \\
\zeta_{26}=&   {\dfrac {-4\,\left( 9+4\,{s}^{2}+12\,s\right) }{ \left( 1-{s}^{2} \right) ^{2}}}.
\end{split}
\end{equation}
\begin{theorem}{\label{RCRCSM}}
	Let $F$
	be Randers change of  square metric on  a homogeneous Finsler space $G/H.$ Then, 	the Ricci curvature  is given by
	\begin{align*}
	Ric(Z)=&^\alpha Ric(Z)+ \dfrac{c^2\left( C^0_{0n}\right)^2 }{\alpha^2(Z)}\Big\{ \left( n-1\right)\zeta_1+\zeta_2  \Big\}
	 +\dfrac{c^3C^0_{0n}C^n_{n0}}{2\alpha(Z)}\Big\{ \left( n-1\right) \left(\zeta_3-\zeta_5 \right) +\zeta_4-\zeta_6 \Big\}\\
	& -\dfrac{cC^0_{0l}\left( C^0_{nl}+C^l_{n0}\right) }{\alpha(Z)}\Big\{ \left( n-1\right)\zeta_7+\zeta_8  \Big\} 
	  + \dfrac{c^4\left( C^n_{n0}\right)^2 }{4}\Big\{ \left( n-1\right)\left( \zeta_9-\zeta_{12} \right) +\zeta_{10}-\zeta_{11}-\zeta_{13} \Big\}\\
	&  - \dfrac{c^2}{4}\Big\{ 4C^0_{0n}C^l_{nl }+\left(C^l_{n0}+C^0_{nl} \right) \left( 2C^0_{nl}+C^n_{0l}+2C^l_{0n}\right) + 2 C^n_{nl}C^0_{l0}\Big\} \zeta_{14} \\
	& + \dfrac{c^2}{4}C^n_{0l}\left( C^l_{n0}+C^0_{nl}\right)\Big\{ \left( n-1\right)\zeta_{15}+\zeta_{16}\Big\} 
	 + \dfrac{c^2}{2}C^n_{nl}C^0_{0l} \Big\{ \left( n-1\right)\zeta_{17}+\zeta_{18}\Big\} \\
	&  - \dfrac{c^2}{4}\left( C^n_{l0}\right)^2\zeta_{19} + \dfrac{c^3}{4}\alpha(Z)C^n_{l0}C^n_{nl}\Big\{ \left( n-1\right)\zeta_{21}+\zeta_{22}\Big\} \\
	 & + \dfrac{c^3}{4}\alpha(Z)\Big\{  4C^n_{n0}C^l_{nl}-C^n_{nl}\left( 4C^0_{nl}-C^n_{0l}\right)   \Big\} \zeta_{23} \\
	&   +\dfrac{c}{4}\alpha(Z)\Big\{ 2C^n_{t 0}C^l_{l t}+C^n_{l t}C^0_{l t}\Big\} \zeta_{24} 
	+ \dfrac{c^2}{4}\alpha^2(Z) \Big\{c^2 \left( C^n_{nl}\right)^2 \zeta_{25}-\left( C^n_{l i}\right)^2 \zeta_{26}  \Big\}, 
	\end{align*}
	where $Z$ is a non-zero vector in $   \mathfrak{k},$ and  $\zeta_1$ to $\zeta_{26}$  are given by (\ref{rcsmzeta}).
\end{theorem}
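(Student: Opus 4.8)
The plan is to specialize the general formula in Theorem \ref{riccicur} to the case of Randers change of the square metric, namely $\bar F = F + \beta$ where $F = (\alpha+\beta)^2/\alpha$, so that $\bar F = \alpha\bar\phi(s)$ with $\bar\phi(s) = 1 + 2s + s^2 + s = 1 + 3s + s^2$. First I would record the derivatives $\bar\phi' = 3 + 2s$, $\bar\phi'' = 2$, and then compute the auxiliary scalar quantities $Q = \bar\phi'/(\bar\phi - s\bar\phi')$, $\Theta$, $\psi$, and $B = b^2$ exactly as in the definitions at the end of Theorem \ref{riccicur}. A short computation gives $\bar\phi - s\bar\phi' = 1 + 3s + s^2 - 3s - 2s^2 = 1 - s^2$, so $Q = (3+2s)/(1-s^2)$, and similarly $\bar\phi - s\bar\phi' + (B-s^2)\bar\phi'' = 1 - s^2 + 2(B - s^2) = 1 - 3s^2 + 2B$, which explains the ubiquitous denominator factor $1 - 3s^2 + 2B$ appearing throughout (\ref{rcsmzeta}); likewise the factors $1-s^2 = (1-s)(1+s)$ and $1 + 3s + s^2$ arise from $Q$ and from $\bar\phi$ itself.

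The next step is to substitute these expressions for $Q$, $\Theta$, $\psi$, $B$ (together with their $s$- and $B$-derivatives $Q_s$, $Q_{ss}$, $\Theta_s$, $\Theta_B$, $\psi_s$, $\psi_{ss}$, $\psi_{sB}$, $\psi_B$) into each of the twenty-six formulae for $\zeta_1,\dots,\zeta_{26}$ given in Theorem \ref{riccicur}. This is a purely mechanical but extremely lengthy rational-function simplification, best carried out with a computer algebra system; the output is precisely the list (\ref{rcsmzeta}). The main obstacle is bookkeeping: each $\zeta_i$ involves products of $Q,\Theta,\psi$ and their derivatives with polynomial coefficients in $s$ and $B$, so getting common denominators and collecting numerator terms correctly is where errors creep in — one must be careful that, e.g., the denominators combine to the stated powers of $(1-s^2)$ and $(1-3s^2+2B)$ and that no sign is dropped. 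It is worth double-checking a few of the simpler ones ($\zeta_7$, $\zeta_8$, $\zeta_{14}$, $\zeta_{24}$, $\zeta_{26}$) by hand as a sanity check, since $\zeta_{14} = 2\psi$, $\zeta_{24} = 2Q$, $\zeta_{26} = -Q^2$, and $\zeta_8 = -\psi_s(B-s^2)$ have short closed forms.

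Finally, I would feed the homogeneous-space reductions of $r_{00}, s_0, r_0, r_{00;0}, r^l_l$, etc.\ — all computed at the origin $o = eH$ via Lemma \ref{lemma3.1} and Lemma \ref{lemma3.2} and already tabulated in the displayed list preceding Theorem \ref{RCSM} — into the expression for $RT^l_l$ from Theorem \ref{riccicur}. Grouping terms by the power of $\alpha = \alpha(Z)$ that multiplies them and by which structure-constant combination occurs, and using $r = 0$, $r^l_0 = r_{l0}$, $s^l_0 = s_{l0}$, together with the identities in the table (for instance $r_0 = \frac{c^2}{2}C^n_{0n}$, $s_0 = \frac{c^2}{2}C^n_{n0}$, $r^l_l = cC^l_{ln}$), collapses $RT^l_l$ into the stated sum. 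In particular the combinations $(n-1)(\zeta_3 - \zeta_5) + \zeta_4 - \zeta_6$ and $(n-1)(\zeta_9 - \zeta_{12}) + \zeta_{10} - \zeta_{11} - \zeta_{13}$ appear because the $\zeta_5$-, $\zeta_6$-, $\zeta_{11}$-, $\zeta_{12}$-, $\zeta_{13}$-terms carry the factor $r r_{00} - r_0^2$ or $r_{00}r_0$ which, after substitution and using $r = 0$, merge with the $r_{00}s_0$ and $s_0^2$ contributions. Adding $^\alpha Ric(Z)$ then yields the formula claimed in Theorem \ref{RCRCSM}; since the derivation is identical in structure to that of Theorem \ref{RCSM} (only $\phi$ is replaced by $\bar\phi$, hence the $\zeta_i$ change from (\ref{smzeta}) to (\ref{rcsmzeta})), no new idea is needed beyond the computation.
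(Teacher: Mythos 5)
Your proposal is correct and follows exactly the route the paper takes: specialize Theorem \ref{riccicur} to $\bar\phi(s)=1+3s+s^2$ (so that $\bar\phi-s\bar\phi'=1-s^2$ and $\bar\phi-s\bar\phi'+(B-s^2)\bar\phi''=1-3s^2+2B$, giving the list (\ref{rcsmzeta})), and then substitute the origin values of $r_{00},s_0,r_0,\dots$ from Lemmas \ref{lemma3.1}--\ref{lemma3.2}, using $r=0$ and $r_0=-s_0$ to produce the combinations $(n-1)(\zeta_3-\zeta_5)+\zeta_4-\zeta_6$ and $(n-1)(\zeta_9-\zeta_{12})+\zeta_{10}-\zeta_{11}-\zeta_{13}$. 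Your explanation of how the $\zeta_5,\zeta_6,\zeta_{11},\zeta_{12},\zeta_{13}$ terms merge is precisely the mechanism at work, so no gap remains beyond the routine algebra.
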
  
\section{Ricci Curvature  of  Homogeneous Finsler Spaces with Certain $(\alpha, \beta)$-Metrics Having Vanishing $S$-Curvature}
We have already given the formula for $S$-curvature of  homogeneous Finsler space with $G$-invariant square  metric in \cite{GSSR2019},  and with   $G$-invariant Randers change of square metric in \cite{SRGS2019}. Here, we give the equivalent  condition for these spaces to have vanishing $S$-curvature.
\begin{theorem}{\label{vanish_s_cur}}
	Let $G/H$ be a compact homogeneous Finsler space and $F$ be a  $G$-invariant  square  metric on $G/H.$  Then $\left( G/H,F\right) $ has vanishing $S$-curvature if and only if $ \left\langle  \left[ w,Y\right]_{\mathfrak k} , Y\right\rangle =0 \ \forall \  Y \in \mathfrak{k}$ and $w \in \mathfrak{k}$ corresponds to the 1-form $\beta.$
\end{theorem}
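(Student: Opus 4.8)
The plan is to read the equivalence off the explicit $S$-curvature formula for a homogeneous square metric that we obtained in \cite{GSSR2019}. That formula gives $\mathbf{S}(Z)$, at the origin $o=eH$ and in a direction $Z\in\mathfrak k\setminus\{0\}$, as a rational function of $\alpha(Z)$, $\beta(Z)=\langle w,Z\rangle$, the constant $c=\lVert w\rVert_\alpha$, and the scalars $r_{00}(Z),r_0(Z),s_0(Z)$ --- the curvature data of $\beta$ at $o$, which Lemma \ref{lemma3.2} expresses through the structure constants of $\mathfrak k$. The first step is to simplify it using the one structural fact automatic in our situation: since $\beta$ is $G$-invariant it has constant $\alpha$-length, so $b^2=c^2$ is constant and hence $r_k+s_k=0$ for every $k$; in particular $r_0=-s_0$. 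Substituting $r_0=-s_0$ (and using the constancy of $b^2$, which kills the volume-density term) collapses the formula of \cite{GSSR2019} to the shape
$$ \mathbf{S}(Z)=A(s)\,\frac{r_{00}(Z)}{\alpha(Z)}+B(s)\,s_0(Z),\qquad s=\frac{\beta(Z)}{\alpha(Z)}, $$
with $A,B$ explicit, not identically vanishing, rational functions of $s$ whose denominators ($1-s$ and $1-3s^2+2b^2$) stay positive on $|s|\le b<1$, built from $Q=2/(1-s)$, $\Theta$ and $\psi$ for $\phi(s)=(1+s)^2$; compactness of $G/H$ is what makes the $S$-curvature formula of \cite{GSSR2019} applicable.

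For the ``if'' direction, suppose $\langle[w,Y]_{\mathfrak k},Y\rangle=0$ for all $Y\in\mathfrak k$. Writing $Z=y^iw_i$ and using $r_{jk}=\tfrac c2(C^k_{jn}+C^j_{kn})$ from Lemma \ref{lemma3.2}, $C^k_{jn}=\langle[w_j,w_n]_{\mathfrak k},w_k\rangle$, and $w=c\,w_n$,
$$ r_{00}(Z)=\sum_{j,k}r_{jk}y^jy^k=c\sum_{j,k}C^k_{jn}y^jy^k=c\,\big\langle[Z,w_n]_{\mathfrak k},Z\big\rangle=-\big\langle[w,Z]_{\mathfrak k},Z\big\rangle, $$
so the hypothesis is exactly $r_{00}(Z)=0$ for every $Z$; as $r_{00}$ is the quadratic form of the symmetric tensor $r_{ij}$, this means $r_{ij}=0$, i.e.\ $\beta$ is Killing. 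A short algebraic step then forces $s_0=r_0=0$: $r_{jk}=\tfrac c2(C^k_{jn}+C^j_{kn})=0$ says that $(X,Y)\mapsto\langle[X,w_n]_{\mathfrak k},Y\rangle$ is skew-symmetric, so evaluating at $Y=w_n$ gives $\langle[X,w_n]_{\mathfrak k},w_n\rangle=0$, i.e.\ $C^n_{n0}=0$, i.e.\ $s_0=0$, whence $r_0=-s_0=0$. With $r_{00}=s_0=0$ the displayed formula yields $\mathbf{S}\equiv0$.

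For the converse, suppose $\mathbf{S}\equiv0$, i.e.\ $A(s)\,r_{00}(Z)/\alpha(Z)+B(s)\,s_0(Z)=0$ for all $Z$; here I would move $Z$ along the $2$-plane spanned by $w$ and a fixed unit vector $e\perp w$. For $Z=w+\nu e$ one has $\beta(Z)=c^2$, $\alpha(Z)=\sqrt{c^2+\nu^2}$, so $s=c^2/\sqrt{c^2+\nu^2}$ runs over $(0,c)$ with $\nu=\tfrac cs\sqrt{c^2-s^2}$, while $r_{00}(Z)=2\nu r_{we}+\nu^2 r_{ee}$ (since $r_{ww}=r_{ij}b^ib^j=r=0$) and $s_0(Z)=\nu\,s_0^{(e)}$ (since $s_0(w)=s_ib^i=0$). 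Substituting and splitting the resulting identity into its part rational in $s$ and its part carrying the factor $\sqrt{c^2-s^2}$ --- legitimate because $\sqrt{c^2-s^2}\notin\R(s)$ --- gives
$$ \tfrac{c^2-s^2}{s}\,A(s)\,r_{ee}=0,\qquad \tfrac{2}{c}\,A(s)\,r_{we}+\tfrac{c}{s}\,B(s)\,s_0^{(e)}=0 . $$
Since $A\not\equiv0$, the first forces $r_{ee}=0$; and checking from the explicit $A,B$ for $\phi(s)=(1+s)^2$ that $sA(s)$ and $B(s)$ are not $\R$-proportional, the second forces $r_{we}=0$ (and $s_0^{(e)}=0$). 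As $e\perp w$ was arbitrary and $r_{ij}$ is symmetric with $r_{ww}=0$, polarization gives $r_{ij}=0$. Then the identity $r_{00}(Z)=-\langle[w,Z]_{\mathfrak k},Z\rangle$ from the previous paragraph turns $r_{ij}=0$ back into $\langle[w,Y]_{\mathfrak k},Y\rangle=0$ for all $Y$, closing the equivalence.

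The main obstacle is this last comparison: isolating the $r_{00}$-block from the $s_0$-block in $\mathbf{S}$ and verifying, from the explicit $A$ and $B$ for $\phi(s)=(1+s)^2$ (hence $Q=2/(1-s)$), that $sA(s)$ and $B(s)$ are not proportional --- the one point that rules out a non-Killing $\beta$ (for instance one with $r_{ij}=\epsilon(b^2a_{ij}-b_ib_j)$) achieving $\mathbf{S}\equiv0$. This step is genuinely computational and rests on the particular shape of the square metric; once it is settled, the ``if'' direction and the passage to the bracket condition follow at once from Lemma \ref{lemma3.2}.
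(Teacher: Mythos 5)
Your proposal is correct in substance but takes a genuinely different route from the paper's. The paper disposes of the analytic half in one sentence: it simply quotes the known equivalence ``$\mathbf S\equiv 0$ if and only if $r_{ij}=0$ and $s_i=0$'' and devotes the entire proof to the Lie-algebraic translation, passing from $r_{jk}=\tfrac{c}{2}\left(C^k_{jn}+C^j_{kn}\right)=0$ and $s_k=\tfrac{c^2}{2}C^n_{nk}=0$ to $\left\langle[w,Y]_{\mathfrak k},Y\right\rangle=0$ and back via the polarizations $\left\langle[w,w_i+w_j]_{\mathfrak k},w_i+w_j\right\rangle=0$ and $\left\langle[w,w+w_i]_{\mathfrak k},w+w_i\right\rangle=0$. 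You instead re-prove that quoted equivalence from the explicit $S$-curvature formula: after using $r_0=-s_0$ (constancy of $b$) to collapse $\mathbf S$ to $A(s)\,r_{00}/\alpha+B(s)\,s_0$, you restrict to the $2$-plane through $w$ and a unit $e\perp w$ and separate the part rational in $s$ from the part carrying $\sqrt{c^2-s^2}$. This buys self-containedness and makes visible exactly where the specific choice $\phi(s)=(1+s)^2$ enters, at the cost of the computational check you yourself flag: that $A\not\equiv 0$ and that $sA(s)$ and $B(s)$ are not $\mathbb R$-proportional. In the usual reduction one has $B=-2QA$ with $Q=2/(1-s)$ for the square metric, so the non-proportionality amounts to $s(1-s)$ not being constant; this is immediate, but you should record it explicitly, since it is the only point at which a degenerate $\phi$ could make the converse fail. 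The Lie-algebra halves of the two arguments are essentially identical --- your identity $r_{00}(Z)=-\left\langle[w,Z]_{\mathfrak k},Z\right\rangle$ together with the skew-symmetry step is exactly the polarized form of the paper's equations, and your observation that $r_{ij}=0$ already forces $s_i=0$ (via $r_i+s_i=0$) slightly streamlines the paper's separate treatment of the two conditions.
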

\begin{proof}
	We know that $F$ has vanishing $S$-curvature if and only if 
	\begin{equation}\label{ss.1}
	r_{ij}=0,   \text{ and \ } s_i=0 \ \ \forall\ 1 \leq i,\ j \leq n.
	\end{equation}
	Therefore, we have to prove that the conditions in equation (\ref{ss.1}) are  equivalent to $ \left\langle  \left[ w,Y\right]_{\mathfrak k} , Y\right\rangle =0 \ \forall \  Y \in \mathfrak{k}.$\\
	Firstly, suppose that $  r_{ij}=0,$   and  $s_i=0 \ \ \forall\ 1 \leq i,\ j \leq n. $
	Using Lemma \ref{lemma3.2}, we have 
	\begin{equation}\label{ss.2}
	\dfrac{c}{2}\left(C^j_{in} + C^i_{jn}\right)=0,
	\end{equation}
	and 
	\begin{equation}\label{ss.3}
	\dfrac{c^2}{2}C^n_{ni}=0.
	\end{equation}
	For $i=j,$ equation  (\ref{ss.2}) gives us 
	$
	\left\langle \left[ w,w_i\right]_{\mathfrak k}, w_i \right\rangle=0.$\\
	Since $\left\{ w_1, w_2, w_3, ... , w_n=\dfrac{w}{c}  \right\}$ is  the orthonormal basis of $\mathfrak{k},$  we have 
	$$ \left\langle  \left[ w,Y\right]_{\mathfrak k} , Y\right\rangle =0 \ \forall \  Y \in \mathfrak{k}.$$
	For the second part, suppose $$ \left\langle  \left[ w,Y\right]_{\mathfrak k} , Y\right\rangle =0 \ \forall \  Y \in \mathfrak{k}.$$
	Therefore, 
	\begin{equation}\label{ss.4}
	\left\langle \left[ w,w_i\right]_{\mathfrak k}, w_i \right\rangle=0 \ \forall \ 1 \leq i \leq n,
	\end{equation}
	\begin{equation}\label{ss.5}
	\left\langle \left[ w,w_i+w_j\right]_{\mathfrak k}, w_i+w_j \right\rangle=0 \ \forall \ 1 \leq i,\ j \leq n.
	\end{equation}
	\begin{equation}\label{ss.6}
	\left\langle \left[ w,w+w_i\right]_{\mathfrak k}, w+w_i \right\rangle=0 \ \forall \ 1 \leq i \leq n.
	\end{equation}
	From (\ref{ss.4}) and (\ref{ss.5}), we get $C^i_{nj} + C^j_{ni}=0,$ \ i.e., \ $ r_{ij}=0. $\\
	Further, from (\ref{ss.4}) and (\ref{ss.6}), we get $\left\langle \left[ w,w_i\right]_{\mathfrak k}, w \right\rangle=0,$ \ i.e., \ $ s_{i}=0. $
\end{proof}
Similarly, we can prove following theorem: 
\begin{theorem}{\label{vanish_RCs_cur}}
	Let $G/H$ be a compact homogeneous Finsler space and $F$ be a  $G$-invariant  Randers change of square  metric on $G/H.$  Then $\left( G/H,F\right) $ has vanishing $S$-curvature if and only if $ \left\langle  \left[ w,Y\right]_{\mathfrak k} , Y\right\rangle =0 \ \forall \  Y \in \mathfrak{k}$ and $w \in \mathfrak{k}$ corresponds to the 1-form $\beta.$
\end{theorem}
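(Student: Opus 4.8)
The plan is to mirror the proof of Theorem \ref{vanish_s_cur} almost verbatim, since the characterization of vanishing $S$-curvature through the vanishing of $r_{ij}$ and $s_i$ is a general fact for $(\alpha,\beta)$-metrics of \emph{polynomial} (in particular non-degenerate Berwald-type) form on compact homogeneous spaces, and does not see the precise shape of $\phi$. Concretely, for a $G$-invariant Finsler metric on a compact homogeneous space one has that $F$ has vanishing $S$-curvature if and only if $r_{ij}=0$ and $s_i=0$ for all $1\le i,j\le n$; this is the criterion already invoked in equation (\ref{ss.1}), and it applies equally to the Randers change $\bar F=F+\beta$ of the square metric because $\bar F$ is again a (general) $(\alpha,\beta)$-metric satisfying Shen's Lemma \ref{Shenlemma} with $\phi(s)=1+s^2+2s+s$ — more precisely, one uses the $S$-curvature formula for this metric already established in \cite{SRGS2019}, which again reduces to the two conditions $r_{ij}=0$, $s_i=0$.

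Granting this reduction, the argument is purely Lie-algebraic and identical to the one given for Theorem \ref{vanish_s_cur}. First I would recall from Lemma \ref{lemma3.2} that at the origin $o=eH$ we have $r_{jk}=\frac{c}{2}\left(C^k_{jn}+C^j_{kn}\right)$ and $s_k=\frac{c^2}{2}C^n_{nk}$. Assuming $r_{ij}=0$ and $s_i=0$, setting $i=j$ in the first relation gives $C^i_{in}=0$, i.e.\ $\left\langle[w,w_i]_{\mathfrak k},w_i\right\rangle=0$ for each basis vector $w_i$; since $\{w_1,\dots,w_n=w/c\}$ is an orthonormal basis of $\mathfrak k$ and the map $Y\mapsto\left\langle[w,Y]_{\mathfrak k},Y\right\rangle$ is quadratic in $Y$, vanishing on all $w_i$ and on all $w_i+w_j$ forces it to vanish identically on $\mathfrak k$. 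Conversely, assuming $\left\langle[w,Y]_{\mathfrak k},Y\right\rangle=0$ for all $Y\in\mathfrak k$, I would specialize to $Y=w_i$, $Y=w_i+w_j$, and $Y=w+w_i$ as in equations (\ref{ss.4})--(\ref{ss.6}); polarizing the first two yields $C^i_{nj}+C^j_{ni}=0$, hence $r_{ij}=0$, while combining (\ref{ss.4}) and (\ref{ss.6}) yields $\left\langle[w,w_i]_{\mathfrak k},w\right\rangle=0$, hence $s_i=0$.

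The only genuine point requiring care — and hence the one I would state explicitly rather than wave at — is that the reduction ``vanishing $S$-curvature $\iff r_{ij}=0$ and $s_i=0$'' does hold for the Randers change of the square metric. This is where one must cite the companion paper \cite{SRGS2019}: the $S$-curvature of this $G$-invariant metric is shown there to be a (rational) expression whose numerator vanishes identically precisely when $r_{ij}=0$ and $s_i=0$, exactly as the square metric case in \cite{GSSR2019}. Once this is in hand, the remainder is the verbatim Lie-algebraic computation above, so the proof can legitimately be compressed to ``Similarly'' as the authors do; for completeness I would nonetheless write out the three or four displayed lines invoking Lemma \ref{lemma3.2}. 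I do not expect any real obstacle beyond bookkeeping, since nothing in the argument distinguishes $\phi(s)=1+2s+s^2$ from $\phi(s)=1+3s+s^2$ once the $S$-curvature reduction is granted.
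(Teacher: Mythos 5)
Your proposal is correct and follows essentially the same route as the paper: the paper proves Theorem \ref{vanish_RCs_cur} by the one-word remark ``Similarly,'' referring back to the proof of Theorem \ref{vanish_s_cur}, which is exactly the reduction to $r_{ij}=0$, $s_i=0$ (via the $S$-curvature formula of \cite{SRGS2019}) followed by the Lie-algebraic polarization argument you reproduce. If anything, your version is marginally more careful than the paper's in the forward direction, since you note explicitly that the full set of relations $C^i_{jn}+C^j_{in}=0$ (not just the diagonal case $i=j$) is what forces the quadratic form $Y\mapsto\left\langle [w,Y]_{\mathfrak k},Y\right\rangle$ to vanish identically.
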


\begin{theorem}
	Let $G/H$ be a compact homogeneous Finsler space with $G$-invariant square  metric $F.$ If $\left( G/H,F\right) $ has vanishing $S$-curvature, then Ricci curvature is given by 
	\begin{equation*}
	Ric(Z)= Ric^{\alpha}(Z)-\dfrac{c^2}{4}\left( C^n_{l 0}\right)^2 \zeta_{19} + \dfrac{c}{4}\alpha(Z)\left( 2\, C^n_{t 0}C^{l }_{l t}+C^n_{l  t}C^0_{l t}\right) \zeta_{24} - \dfrac{c^2}{4} \alpha^2(Z)\left( C^n_{ik}\right)^2 \zeta_{26},  
	\end{equation*}
	where $Z$ is a non-zero vector in $   \mathfrak{k},$ and  $\zeta_{19}, \ \zeta_{24},\ \zeta_{26}  $   are same as  given in (\ref{smzeta}). 
\end{theorem}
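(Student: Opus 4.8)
The plan is to substitute the vanishing-$S$-curvature hypothesis into the general formula of Theorem~\ref{RCSM} and track which of its summands survive. By Theorem~\ref{vanish_s_cur} and its proof, a $G$-invariant square metric on the compact homogeneous space $G/H$ has vanishing $S$-curvature if and only if $r_{ij}=0$ and $s_i=0$ for all $i,j$; at the origin $o=eH$ this is, via Lemma~\ref{lemma3.2}, precisely the pair of families of relations among the structure constants
\[
C^{k}_{jn}+C^{j}_{kn}=0, \qquad C^{n}_{nj}=0, \qquad 1\le j,k\le n .
\]
From these I would record the consequences needed below: $C^{0}_{0n}=0$ (this scalar equals $r_{00}/c$), $C^{0}_{n\ell}+C^{\ell}_{n0}=0$ (it equals $-2r_{0\ell}/c$), $C^{n}_{n0}=0$ and $C^{n}_{n\ell}=0$ in every contracted form, and the vanishing of all the covariant-derivative scalars that occur in Theorem~\ref{RCSM}, namely $r_{00;0}$, $r_{00;\ell}b^{\ell}$, $r_{0\ell;0}b^{\ell}$, $s_{0;0}$, $s_{0;\ell}b^{\ell}$, $s_{\ell;0}b^{\ell}$. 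The first three vanish because in the explicit expressions listed just before Theorem~\ref{RCSM} they are built from combinations such as $C^{0}_{in}+C^{i}_{0n}$ and $C^{t}_{0n}+C^{0}_{tn}$, which are zero by the first identity above (alternatively, $r_{ij}$ is a $G$-invariant tensor vanishing at $o$, hence identically zero, so $\nabla r\equiv 0$); the last three vanish since each carries a factor $C^{n}_{nt}$.

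Next I would go block by block through the displayed formula of Theorem~\ref{RCSM} and delete every summand carrying one of these vanishing quantities: the $\zeta_{1},\zeta_{2}$ block and the $\zeta_{3}$--$\zeta_{6}$ block (factor $C^{0}_{0n}$), the $\zeta_{7},\zeta_{8}$ block (factor $C^{0}_{n\ell}+C^{\ell}_{n0}$), the $\zeta_{9}$--$\zeta_{13}$ block (factor $C^{n}_{n0}$), the $\zeta_{14}$ block (its bracket is a combination of $C^{0}_{0n}$, of $C^{0}_{n\ell}+C^{\ell}_{n0}$, and of $C^{n}_{n\ell}$), the $\zeta_{15},\zeta_{16}$ block (factor $C^{0}_{n\ell}+C^{\ell}_{n0}$), the $\zeta_{17},\zeta_{18}$ block (factor $C^{n}_{n\ell}$), the $\zeta_{21},\zeta_{22}$ block (factor $C^{n}_{n\ell}$), the $\zeta_{23}$ block (its bracket is $4C^{n}_{n0}C^{\ell}_{n\ell}-C^{n}_{n\ell}(\cdots)$), and the $\zeta_{25}$ part of the last block (factor $C^{n}_{n\ell}$). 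What is left is exactly the three blocks with coefficients $\zeta_{19}$, $\zeta_{24}$, $\zeta_{26}$; these come from $s_{0\ell}s^{\ell}_{0}$, $s^{\ell}_{0;\ell}$ and $s^{i}_{\ell}s^{\ell}_{i}$, whose structure-constant data $C^{n}_{\ell 0}$, $C^{n}_{\ell t}$ are built from $s_{ij}$ (not from $s_i$ or $r_{ij}$) and so are not killed by the hypothesis. Reading their homogeneous values off the computed-quantities list, using the antisymmetry $C^{n}_{\ell t}=-C^{n}_{t\ell}$ to drop the cross-terms inside $s^{\ell}_{0;\ell}$, and adding $^{\alpha}Ric(Z)=Ric^{\alpha}(Z)$, yields the asserted identity with $\zeta_{19},\zeta_{24},\zeta_{26}$ as in (\ref{smzeta}).

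The argument is essentially all bookkeeping and no single step is difficult; the only place calling for a little care is confirming that the covariant-derivative terms $r_{00;0}$, $r_{00;\ell}b^{\ell}$, $r_{0\ell;0}b^{\ell}$ genuinely disappear, which is where I would either lean on $G$-invariance of $r_{ij}$ or unwind the explicit formulas of Lemma~\ref{lemma3.2} and invoke $C^{k}_{jn}+C^{j}_{kn}=0$; I expect the $G$-invariance route to be the shortest.
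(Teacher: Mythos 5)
Your proposal is correct and follows essentially the same route as the paper: the paper likewise derives $C^0_{0n}=0$, $C^n_{n0}=0$, $C^n_{nl}=0$ and $C^0_{nl}+C^l_{n0}=0$ from the vanishing-$S$-curvature condition (via polarization of $\langle[w,Y]_{\mathfrak k},Y\rangle=0$, which is equivalent to your $r_{ij}=0$, $s_i=0$ by Theorem~\ref{vanish_s_cur}) and then substitutes into Theorem~\ref{RCSM}. Your extra worry about the covariant-derivative scalars is already settled by the fact that in Theorem~\ref{RCSM} those quantities have been expanded into structure constants carrying exactly the vanishing factors, so no separate argument is needed.
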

\begin{proof}
	Since  $\left( G/H,F\right) $ has vanishing $S$-curvature, by  Theorem \ref{vanish_s_cur},
	$$ \left\langle  \left[ w,Y\right]_{\mathfrak k}, Y  \right\rangle =0 \ \forall\ Y \in \mathfrak{k}.$$
	Therefore, 
	\begin{align*}
	C^n_{nl}&=\left\langle \left[ w_n,w_l\right]_{\mathfrak k}, w_n \right\rangle \\
	&=\dfrac{1}{c^2}\big\{\left\langle \left[ w, w_l\right]_{\mathfrak k}, w \right\rangle + \left\langle \left[ w,w_l\right]_{\mathfrak k}, w_l \right\rangle\big\}\\
	&=\dfrac{1}{c^2}\left\langle \left[ w, w_l\right]_{\mathfrak k}, w+w_l \right\rangle \\
	&=\dfrac{1}{c^2}\left\langle \left[ w,w\right]_{\mathfrak k} + \left[ w, w_l\right]_{\mathfrak k}, w+w_l \right\rangle\\
	&=\dfrac{1}{c^2}\left\langle \left[ w,w+w_l\right]_{\mathfrak k}, w+w_l \right\rangle\\
	&=0 \ \ \ \ \ \forall\  1 \leq l \leq n.
	\end{align*}
	Further,  for non-zero $X \in \mathfrak k,$ we have 
	\begin{align*}
	C^n_{n0}&=\left\langle \left[ w_n,X\right]_{\mathfrak k}, w_n \right\rangle \\
	&=\dfrac{1}{c^2}\big\{ \left\langle \left[ w, X\right]_{\mathfrak k}, w \right\rangle + \left\langle \left[ w,X\right]_{\mathfrak k}, X \right\rangle\big\}\\
	&=\dfrac{1}{c^2}\left\langle \left[ w, X\right]_{\mathfrak k}, w+X \right\rangle \\
	&=\dfrac{1}{c^2}\left\langle \left[ w,w+X\right]_{\mathfrak k}, w+X \right\rangle
	=0,\\
	C^0_{0n} &=\left\langle \left[X, w_n\right]_{\mathfrak k}, X  \right\rangle
	=\left\langle  \left[X, \dfrac{w}{c}\right]_{\mathfrak k}, X  \right\rangle=0,
	\end{align*}
	and
	\begin{align*}
	C^0_{nl}+C^l_{n0}&= \left\langle \left[ w_n,w_l\right]_{\mathfrak k}, X \right\rangle + \left\langle \left[ w_n,X\right]_{\mathfrak k},  w_l \right\rangle\\
	&=\dfrac{1}{c}\Big\{ \left\langle\left[ {w},w_l\right]_{\mathfrak k},  X  \right\rangle + \left\langle  \left[ w, X\right]_{\mathfrak k}, w_l  \right\rangle \Big\}\\
	&=\dfrac{1}{c}\Big\{ \left\langle \left[ w,w_l\right]_{\mathfrak k},  X  \right\rangle
	+ \left\langle  \left[ w,X\right]_{\mathfrak k}, w_l  \right\rangle 
	+ \left\langle  \left[ w,w_l\right]_{\mathfrak k}, w_l  \right\rangle 
	+ \left\langle  \left[ w,X\right]_{\mathfrak k}, X  \right\rangle\Big\}\\
	&=\dfrac{1}{c}\Big\{ \left\langle \left[ w,w_l\right]_{\mathfrak k}, X+w_l  \right\rangle + \left\langle \left[ w,X\right]_{\mathfrak k}, X+w_l  \right\rangle \Big\}\\
	&= \dfrac{1}{c}\Big\{ \left\langle \left[ w,w_l\right]_{\mathfrak k} + \left[ w,X\right]_{\mathfrak k}, X+w_l \right\rangle  \Big\}\\
	&=\dfrac{1}{c}\left\langle  \left[ w,X+w_l\right]_{\mathfrak k}, X+w_l  \right\rangle \\
	&=0.
	\end{align*}
	Using  the above values in Theorem \ref{RCSM}, we get the required result.
\end{proof}
In similar manner, we can prove the following theorem:
\begin{theorem}
	Let $G/H$ be a compact homogeneous Finsler space and $F$ be a  $G$-invariant Randers change of square  metric on $G/H.$ If $\left( G/H,F\right) $ has vanishing $S$-curvature, then Ricci curvature is given by 
	\begin{equation*}
	Ric(Z)= Ric^{\alpha}(Z)-\dfrac{c^2}{4}\left( C^n_{l 0}\right)^2 \zeta_{19} + \dfrac{c}{4}\alpha(Z)\left( 2\, C^n_{t 0}C^{l }_{l t}+C^n_{l  t}C^0_{l t}\right) \zeta_{24} - \dfrac{c^2}{4} \alpha^2(Z)\left( C^n_{ik}\right)^2 \zeta_{26},  
	\end{equation*}
	where $Z$ is a non-zero vector in $   \mathfrak{k},$ and  $\zeta_{19}, \ \zeta_{24},\ \zeta_{26}  $   are same as  given in (\ref{RCRCSM}). 
\end{theorem}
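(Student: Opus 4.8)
The plan is to imitate the proof of the preceding theorem almost verbatim, with Theorem~\ref{vanish_s_cur} replaced by Theorem~\ref{vanish_RCs_cur} and Theorem~\ref{RCSM} replaced by Theorem~\ref{RCRCSM}. First I would invoke Theorem~\ref{vanish_RCs_cur}: the hypothesis that $\left(G/H,F\right)$ has vanishing $S$-curvature is equivalent to $\left\langle\left[w,Y\right]_{\mathfrak k},Y\right\rangle=0$ for every $Y\in\mathfrak k$, where $w\in\mathfrak k$ is the vector corresponding to the $1$-form $\beta$.

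Next I would extract from this bilinear vanishing condition the relations on the structure constants that annihilate most of the terms in Theorem~\ref{RCRCSM}. Substituting $Y=w_l$, $Y=w+w_l$, $Y=X$ and $Y=w+X$ (for an arbitrary nonzero $X\in\mathfrak k$) into the identity above and polarizing yields, exactly as in the square-metric case, $C^n_{nl}=0$, $C^n_{n0}=0$, $C^0_{0n}=0$ and $C^0_{nl}+C^l_{n0}=0$. These derivations use only the expressions for $s_{ij}$, $s_i$ and $r_{jk}$ recorded in Lemma~\ref{lemma3.2}, so they are insensitive to the choice of $\phi$ and carry over word for word from the proof of the corresponding square-metric theorem.

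Finally I would substitute these four relations into the formula of Theorem~\ref{RCRCSM}. A term-by-term inspection shows that every summand other than $Ric^{\alpha}(Z)$, the term $-\frac{c^2}{4}\left(C^n_{l0}\right)^2\zeta_{19}$, the term $\frac{c}{4}\alpha(Z)\left(2C^n_{t0}C^l_{lt}+C^n_{lt}C^0_{lt}\right)\zeta_{24}$, and the term $-\frac{c^2}{4}\alpha^2(Z)\left(C^n_{ik}\right)^2\zeta_{26}$ carries a factor equal to one of $C^0_{0n}$, $C^n_{n0}$, $C^n_{nl}$ or $C^0_{nl}+C^l_{n0}$, and hence vanishes; what survives is precisely the asserted expression, now with $\zeta_{19},\zeta_{24},\zeta_{26}$ read off from (\ref{rcsmzeta}) rather than (\ref{smzeta}).

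The one place that needs a little attention is the bookkeeping in this last step: one must verify that the $\zeta_{14}$-bracket $4C^0_{0n}C^l_{nl}+\left(C^l_{n0}+C^0_{nl}\right)\left(2C^0_{nl}+C^n_{0l}+2C^l_{0n}\right)+2C^n_{nl}C^0_{l0}$ and the $\zeta_{23}$-bracket $4C^n_{n0}C^l_{nl}-C^n_{nl}\left(4C^0_{nl}-C^n_{0l}\right)$ are indeed killed (they are, since each of their summands contains one of the four vanishing quantities as a factor), and that the $\zeta_{25}$-contribution $c^2\left(C^n_{nl}\right)^2\zeta_{25}$ likewise disappears. Since nothing beyond these algebraic identities enters, the argument is genuinely the same as for the preceding theorem, only with the coefficients of (\ref{rcsmzeta}) in place of those of (\ref{smzeta}).
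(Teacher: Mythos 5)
Your proposal is correct and matches the paper's intent exactly: the paper itself offers no separate proof of this theorem, merely remarking that it follows ``in similar manner'' from the square-metric case, and your write-up is precisely that argument spelled out --- invoking Theorem~\ref{vanish_RCs_cur} to obtain $C^0_{0n}=C^n_{n0}=C^n_{nl}=0$ and $C^0_{nl}+C^l_{n0}=0$ (a derivation that indeed depends only on Lemma~\ref{lemma3.2} and not on $\phi$), and then observing that every term of Theorem~\ref{RCRCSM} except the $\zeta_{19}$, $\zeta_{24}$ and $\zeta_{26}$ contributions carries one of these vanishing factors. Your term-by-term bookkeeping is accurate, and you are also right that the surviving coefficients should be read from (\ref{rcsmzeta}) rather than from the theorem label cited in the statement.
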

\begin{cor}
	Let $ G/H$ be a homogeneous Finsler space and $F$ be a  $G$-invariant square  metric or Randers change of square  metric on $G/H.$ If $\left( G/H,F\right) $ has vanishing $S$-curvature  and negative Ricci curvature, then it must be Riemannian.
\end{cor}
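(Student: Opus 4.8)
The plan is to use the structural formulae from the two theorems just established. Suppose $(G/H,F)$ has vanishing $S$-curvature, where $F$ is either a $G$-invariant square metric or a $G$-invariant Randers change of square metric. By the two preceding theorems, the Ricci curvature collapses to
\[
Ric(Z)= Ric^{\alpha}(Z)-\dfrac{c^2}{4}\left( C^n_{l 0}\right)^2 \zeta_{19} + \dfrac{c}{4}\alpha(Z)\left( 2\, C^n_{t 0}C^{l }_{l t}+C^n_{l  t}C^0_{l t}\right) \zeta_{24} - \dfrac{c^2}{4} \alpha^2(Z)\left( C^n_{ik}\right)^2 \zeta_{26}.
\]
First I would look at $s=0$, i.e.\ restrict attention to the directions $Z\perp w$ in $\mathfrak k$. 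At $s=0$ the middle term, which carries the single odd power $\alpha(Z)$, reflects the $\beta$-direction and should drop out for such $Z$; more precisely I would argue that the homogeneity/evenness of $Ric$ in $Z$ forces the coefficient of $\alpha(Z)$ to contribute nothing to the sign analysis. Evaluating $\zeta_{19}$ and $\zeta_{26}$ at $s=0$ from \eqref{smzeta} (resp.\ \eqref{rcsmzeta}) gives $\zeta_{19}(0)=-4<0$ and $\zeta_{26}(0)=-4<0$ (for the square metric; the Randers-change values are $\zeta_{19}(0)=-14<0$ and $\zeta_{26}(0)=-36<0$, both still negative). Hence, along $Z\perp w$,
\[
Ric(Z)= Ric^{\alpha}(Z)+\dfrac{c^2}{4}\left( C^n_{l 0}\right)^2 |\zeta_{19}(0)| + \dfrac{c^2}{4} \alpha^2(Z)\left( C^n_{ik}\right)^2 |\zeta_{26}(0)|,
\]
so the two non-Riemannian terms are manifestly \emph{non-negative}.

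Next I would invoke the hypothesis that $Ric<0$, together with the Bao--Robles-type principle referenced in the introduction (Bao--Robles \cite{DBaoCRob2004}), that a \emph{Riemannian} homogeneous space with negative Ricci curvature exists in the present setting, so $Ric^{\alpha}(Z)$ is itself negative (or at worst one uses the averaged/integrated version over the compact $G/H$). Since $Ric(Z)<0$ while the added terms are $\ge 0$, each added term must vanish: $\left(C^n_{l0}\right)^2=0$ and $\left(C^n_{ik}\right)^2=0$ for all relevant indices, which says precisely that all structure constants $C^n_{jk}$ with an upper index in the $w$-direction vanish. By Lemma \ref{lemma3.2} this gives $s_{jk}=\tfrac{c}{2}C^n_{jk}=0$ for all $j,k$, i.e.\ $\beta$ is closed; combined with vanishing $S$-curvature (hence $r_{ij}=0$), $\beta$ is parallel. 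A parallel one-form of constant length on a homogeneous space makes the $(\alpha,\beta)$-metric Berwaldian and, since the defining $\phi$ here is genuinely non-quadratic, the only way the added curvature terms and the surviving mixed term can all be consistent is that $\beta\equiv 0$ (equivalently $c=0$), so $F=\alpha$ is Riemannian.

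The main obstacle I anticipate is handling the odd term $\tfrac{c}{4}\alpha(Z)\big(2C^n_{t0}C^l_{lt}+C^n_{lt}C^0_{lt}\big)\zeta_{24}$ rigorously: unlike the other two it is not a perfect square and changes sign with $Z$, so one cannot simply read off positivity. The clean way around this is to pair $Z$ with $-Z$: $Ric(\cdot)$ is even and $\alpha(\cdot)$ is even in $Z$ but the bracket $\big(2C^n_{t0}C^l_{lt}+C^n_{lt}C^0_{lt}\big)$, being built from the $0$-index (the $Z$-direction) an \emph{odd} number of times, is odd in $Z$, so it must vanish identically on the subspace where $Ric<0$ is to be applied directionally; then only the two square terms remain, and the argument above closes. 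Verifying this parity bookkeeping and the claim that a non-quadratic Berwald $(\alpha,\beta)$-metric of this type with parallel $\beta$ forces $\beta=0$ are the two places where genuine work is needed; everything else is substitution into \eqref{smzeta}, \eqref{rcsmzeta} and the two Ricci formulae.
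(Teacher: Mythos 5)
Your argument breaks down at its central step. From
$Ric(Z)=Ric^{\alpha}(Z)+A(Z)$ with $A(Z)\ge 0$ and $Ric(Z)<0$ one can only conclude $Ric^{\alpha}(Z)<0$; one cannot conclude $A(Z)=0$. The nonnegative correction terms could perfectly well be absorbed by a sufficiently negative $Ric^{\alpha}(Z)$, so nothing forces the structure constants to vanish. What is missing is a \emph{lower} bound on $Ric^{\alpha}$ in some distinguished direction, and that is exactly what the vanishing of the $S$-curvature supplies: by Theorems \ref{vanish_s_cur} and \ref{vanish_RCs_cur}, $\langle [w,Y]_{\mathfrak k},Y\rangle=0$ for all $Y\in\mathfrak k$, i.e.\ the invariant field $w$ dual to $\beta$ is a Killing field of constant length for $\alpha$, whence $Ric^{\alpha}(w)=\lVert\nabla w\rVert_{\alpha}^2\ge 0$ (the Bochner-type identity). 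The proof should therefore be run at the single direction $Z=w$ (so $s=b=c$, not $s=0$): there $C^n_{n\ell}=0$ kills the $\zeta_{19}$ term and the $C^n_{t0}C^{\ell}_{\ell t}$ part of the $\zeta_{24}$ term, while $\zeta_{24}=4/(1-c)>0$ and $-\zeta_{26}=4/(1-c)^2>0$ make the surviving corrections nonnegative, giving $Ric(w)\ge Ric^{\alpha}(w)\ge 0$ and contradicting $Ric<0$ unless $c=0$, i.e.\ $\beta=0$.

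Two secondary points confirm that your route cannot be repaired as written. First, the parity argument for discarding the $\alpha(Z)\,\zeta_{24}$ term fails: $F$ is not reversible, so $Ric$ is not even in $Z$, and $\zeta_{24}$ depends on $s=\beta(Z)/\alpha(Z)$, which is odd in $Z$, so the term is not odd overall. Second, your final step asserts that a parallel nonzero $\beta$ is impossible for a non-quadratic $(\alpha,\beta)$-metric; this is false (such Berwald metrics exist, and for them $Ric=Ric^{\alpha}$ with $A\equiv 0$, so your vanishing conclusion would give no contradiction at all). The corollary survives in that case only because a parallel $w$ satisfies $Ric^{\alpha}(w)=0$, which again violates strict negativity precisely in the direction $w$ --- further evidence that the direction $Z=w$, not $Z\perp w$, is where the argument must be made.
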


\section*{Acknowledgements}
	First author is very much thankful to UGC for providing financial assistance in terms of JRF fellowship vide letter with  Sr. No. 2061641032 and  Ref. No. 19/06/2016(i)EU-V.

\end{document}